\title{Competitive Gradient Descent}
\author{%
  Florian Sch{\"a}fer\\
  Computing and Mathematical Sciences\\
  California Institute of Technology\\
  Pasadena, CA 91125 \\
  \texttt{florian.schaefer@caltech.edu} 
  \And
  Anima Anandkumar \\
  Computing and Mathematical Sciences  \\
  California Institute of Technology \\
  Pasadena, CA 91125 \\
  \texttt{anima@caltech.edu}
}
\begin{document}
 
\maketitle

\begin{abstract}
    We introduce a new algorithm for the numerical computation of Nash equilibria of competitive two-player games.
    Our method is a natural generalization of gradient descent to the two-player setting where the update is given by the Nash equilibrium of a regularized bilinear local approximation of the underlying game.  It avoids   oscillatory and divergent behaviors seen in alternating gradient descent. Using numerical experiments and rigorous analysis, we provide a detailed comparison to methods based on \emph{optimism} and \emph{consensus}  and show that our method avoids making any unnecessary changes to the gradient dynamics while achieving exponential (local) convergence for (locally) convex-concave zero sum games.
    Convergence and stability properties of our method are robust to strong interactions between the players, without adapting the stepsize, which is not the case with previous methods. 
    In our numerical experiments on non-convex-concave problems, existing methods are prone to divergence and instability due to their sensitivity to interactions among the players, whereas we never observe divergence of our algorithm.
    The ability to choose larger stepsizes furthermore allows our algorithm to achieve faster convergence, as measured by the number of model evaluations.
\end{abstract}


\newcommand{\Reals}{\mathbb{R}}
\newcommand{\argmin}{\operatorname{argmin}}
\newcommand{\argmax}{\operatorname{argmax}}
\newcommand{\Id}{\operatorname{Id}}
\newcommand{\defeq}{\coloneqq}
\newcommand{\steps}{\lambda}
\newcommand{\df}[1]{\operatorname{d} \! #1}
\newcommand{\todo}[1]{{\color{red} #1}}
\newcommand{\Expect}{\mathbb{E}}
\newcommand{\Cov}{\operatorname{Cov}}
\newcommand{\Var}{\operatorname{Var}}
\newcommand{\tN}{\tilde{N}}
\newcommand{\tM}{\tilde{M}}
\newcommand{\bM}{\bar{M}}

\newtheorem{theorem}{Theorem}[section]
\newtheorem{corollary}{Corollary}[theorem]
\newtheorem{lemma}[theorem]{Lemma}
\newtheorem{remark}[theorem]{Remark}

\section{Introduction}
\textbf{Competitive optimization:} Whereas traditional optimization is concerned with a single agent trying to optimize a cost function, competitive optimization extends this problem to the setting of multiple agents each trying to minimize their own cost function, which in general depends on the actions of all agents.
The present work deals with the case of two such agents:
\begin{align}
    \label{eqn:game}
    &\min_{x \in \Reals^m} f(x,y),\ \ \ \min_{y \in \Reals^n} g(x,y)
\end{align}
for two functions $f,g:\Reals^m \times \Reals^n \longrightarrow \Reals$. \\
In single agent optimization, the solution of the problem consists of the minimizer of the cost function.
In competitive optimization, the right definition of \emph{solution} is less obvious, but often one is interested in computing
Nash-- or strategic equilibria:  Pairs of strategies, such that no player can decrease their costs by unilaterally changing their strategies.
If $f$ and $g$ are not convex, finding a global Nash equilibrium is typically impossible and instead we hope to find a "good" local Nash equilibrium.

\textbf{The benefits of competition:}
While competitive optimization problems arise naturally in mathematical economics and game/decision theory \citep{nisan2007algorithmic}, they also provide a highly expressive and transparent language to formulate algorithms in a wide range of domains.
In optimization \citep{bertsimas2011theory} and statistics \citep{huber2009robust} it has long been observed that competitive optimization is a natural way to encode robustness requirements of algorithms. 
More recently, researchers in machine learning have been using multi-agent optimization to design highly flexible objective functions for reinforcement learning \citep{liu2016proximal,pfau2016connecting,pathak2017curiosity,wayne2014hierarchical,vezhnevets2017feudal} and generative models \citep{goodfellow2014generative}.
We believe that this approach has still a lot of untapped potential, but its full realization depends crucially on the development of efficient and reliable algorithms for the numerical solution of competitive optimization problems.

\textbf{Gradient descent/ascent and the cycling problem:}
For differentiable objective functions, the most naive approach to solving~\eqref{eqn:game} is gradient descent ascent (GDA), whereby both players independently change their strategy in the direction of steepest descent of their cost function.
Unfortunately, this procedure features oscillatory or divergent behavior even in the simple case of a bilinear game ($f(x,y) = x^{\top} y = -g(x,y)$) (see Figure~\ref{fig:bilinear_strong}).
In game-theoretic terms, GDA lets both players choose their new strategy optimally with respect to the last move of the other player.
Thus, the cycling behaviour of GDA is not surprising: It is the analogue of \emph{"Rock! Paper! Scissors! Rock! Paper! Scissors! Rock! Paper!..."} in the eponymous hand game.
While gradient descent is a reliable basic \emph{workhorse} for single-agent optimization, GDA can not play the same role for competitive optimization. 
At the moment, the lack of such a \emph{workhorse} greatly hinders the broader adoption of methods based on competition.

\textbf{Existing works:}
Most existing approaches to stabilizing GDA follow one of three lines of attack.\\
In the special case $f = -g$, the problem can be written as a minimization problem $\min_{x} F(x)$, where $F(x) \defeq \max_{y} f(x,y)$. 
For certain structured problems, \cite{gilpin2007gradient} use techniques from convex optimization \citep{nesterov2005excessive} to minimize the implicitly defined $F$.
For general problems, the two-scale update rules proposed in \cite{goodfellow2014generative,heusel2017gans,metz2016unrolled} can be seen as an attempt to approximate $F$ and its gradients.\\
In GDA, players pick their next strategy based on the last strategy picked by the other players.
Methods based on \emph{follow the regularized leader} \citep{shalev2007convex, grnarova2017online}, \emph{fictitious play} \citep{brown1951iterative}, \emph{predictive updates} \citep{yadav2017stabilizing}, \emph{opponent learning awareness} \citep{foerster2018learning}, and \emph{optimism} \citep{rakhlin2013online, daskalakis2017training, mertikopoulos2019optimistic} propose more sophisticated heuristics that the players could use to predict each other's next move. 
Algorithmically, many of these methods can be considered variations of the \emph{extragradient method} \citep{korpelevich1977extragradient}(see also \cite{facchinei2003finite}[Chapter 12]).
Finally, some methods directly modify the gradient dynamics, either by promoting convergence through gradient penalties \citep{mescheder2017numerics}, or by attempting to disentangle convergent \emph{potential} parts from rotational \emph{Hamiltonian} parts of the vector field \citep{balduzzi2018mechanics,letcher2019differentiable,gemp2018global}.\\

\textbf{Our contributions:}
Our main \emph{conceptual} objection to most existing methods is that they lack a clear game-theoretic motivation, but instead rely on the ad-hoc introduction of additional assumptions, modifications, and model parameters.\\
Their main \emph{practical} shortcoming is that to avoid divergence the stepsize has to be chosen inversely proportional to the magnitude of the interaction of the two players (as measured by $D_{xy}^2 f$, $D_{xy}^2 g$).\\
On the one hand, the small stepsize results in slow convergence.
On the other hand, a stepsize small enough to prevent divergence will not be known in advance in most problems. Instead it has to be discovered through tedious trial and error, which is further aggravated by the lack of a good diagnostic for improvement in multi-agent optimization (which is given by the objective function in single agent optimization).\\
We alleviate the above mentioned problems by introducing a novel algorithm, \emph{competitive gradient descent} (CGD) that is obtained as a natural extension of gradient descent to the competitive setting. Recall that in the single player setting,  
the gradient descent update is obtained as the optimal solution to a regularized linear approximation of the cost function. In the same spirit, the update of CGD is given by the Nash equilibrium of a regularized \emph{bilinear} approximation of the underlying game.
The use of a bilinear-- as opposed to linear approximation lets the local approximation preserve the competitive nature of the problem, significantly improving stability.
We prove (local) convergence results of this algorithm in the case of (locally) convex-concave zero-sum games. 
We also show that stronger interactions between the two players only improve convergence, without requiring an adaptation of the stepsize.
In comparison, the existing methods need to reduce the stepsize to match the increase of the interactions to avoid divergence, which we illustrate on a series of polynomial test cases considered in previous works.\\
We begin our numerical experiments by trying to use a  GAN on a bimodal Gaussian mixture model.
Even in this simple example, trying five different (constant) stepsizes under RMSProp, the existing methods diverge.
The typical solution would be to decay the learning rate. However even with a constant learning rate, CGD succeeds with all these stepsize choices to approximate the main features of the target distribution. 
In fact, throughout our experiments we \emph{never} saw CGD diverge.
In order to measure the convergence speed more quantitatively, we next consider a nonconvex matrix estimation problem, measuring computational complexity in terms of the number of gradient computations performed.
We observe that all methods show improved speed of convergence for larger stepsizes, with CGD roughly matching the convergence speed of optimistic gradient descent~\citep{daskalakis2017training}, at the same stepsize.
However, as we increase the stepsize, other methods quickly start diverging, whereas CGD continues to improve, thus being able to attain significantly better convergence rates (more than two times as fast as the other methods in the noiseless case, with the ratio increasing for larger and more difficult problems).
For small stepsize or games with weak interactions on the other hand, CGD automatically invests less computational time per update, thus gracefully transitioning to a cheap correction to GDA, at minimal computational overhead.
We believe that the robustness of CGD makes it an excellent candidate for the fast and simple training of machine learning systems based on competition, hopefully helping them reach the same level of automatization and ease-of-use that is already standard in minimization based machine learning.

\section{Competitive gradient descent}
\label{sec:cgd}
We propose a novel algorithm, which we call \emph{competitive gradient descent} (CGD), for the solution of competitive optimization problems  $\min_{x \in \Reals^m} f(x,y),\ \min_{y \in \Reals^n} g(x,y)$, where we have access to function evaluations, gradients, and Hessian-vector products of the objective functions. \footnote{Here and in the following, unless otherwise mentioned, all derivatives are evaluated in the point $(x_k, y_k)$}
\begin{algorithm}[H]
	\For{$0 \leq k \leq N-1$}{
              $x_{k+1}  = x_{k} - \eta \left( \Id - \eta^2 D_{xy}^2f D_{yx}^2 g \right)^{-1} 
              \left( \nabla_{x} f - \eta D_{xy}^2f  \nabla_{y} g \right)$\;
              $y_{k+1} = y_{k} - \eta \left( \Id - \eta^2 D_{yx}^2g D_{xy}^2 f \right)^{-1}  
              \left( \nabla_{y} g - \eta D_{yx}^2g  \nabla_{x} f \right)$\;
	}
	\caption{\label{alg:CGD} Competitive Gradient Descent (CGD)}
	\Return $(x_{N},y_{N})$\;
\end{algorithm}
\textbf{How to linearize a game:} To motivate this algorithm, we remind ourselves that gradient descent with stepsize $\eta$ applied to the function $f:\Reals^m \longrightarrow \Reals$ can be written as
\begin{equation}
    x_{k+1} = \argmin \limits_{x \in \Reals^m} (x^{\top} - x_{k}^{\top}) \nabla_x f(x_k) + \frac{1}{2\eta} \|x - x_{k}\|^2.
\end{equation}
This models a (single) player solving a local linear approximation of the (minimization) game, subject to a quadratic penalty that expresses her limited confidence in the global accuracy of the model. The natural generalization of this idea to the competitive case should then be given by the two players solving a local approximation of the true game, both subject to a quadratic penalty that expresses their limited confidence in the accuracy of the local approximation. \\
In order to implement this idea, we need to find the appropriate way to generalize the linear approximation in the single agent setting to the competitive setting: \emph{How to linearize a game?}.

\textbf{Linear or Multilinear:} GDA answers the above question by choosing a linear approximation of $f,g: \Reals^m \times \Reals^n \longrightarrow \Reals$.
This seemingly natural choice has the flaw that linear functions can not express any interaction between the two players and are thus unable to capture the competitive nature of the underlying problem.
From this point of view it is not surprising that the convergent modifications of GDA are, implicitly or explicitly, based on higher order approximations (see also \citep{li2017limitations}).
An equally valid generalization of the linear approximation in the single player setting is to use a \emph{bilinear} approximation in the two-player setting.
Since the bilinear approximation is the lowest order approximation that can capture some interaction between the two players, we argue that the natural generalization of gradient descent to competitive optimization is not GDA, but rather the update rule $(x_{k+1},y_{k+1}) = (x_k,y_k) + (x,y)$, where $(x,y)$ is a Nash equilibrium of the game \footnote{We could alternatively use the penalty $(x^{\top}x + y^{\top}y)/(2 \eta)$ for both players, without changing the solution.}
\begin{align}
    \begin{split}
    \label{eqn:localgame}
    \min_{x \in \Reals^m} x^{\top} \nabla_x f &+ x^{\top} D_{xy}^2 f y + y^{\top} \nabla_y f + \frac{1}{2\eta} x^{\top} x \\
    \min_{y \in \Reals^n}  y^{\top} \nabla_y g &+ y^{\top} D_{yx}^2 g x + x^{\top} \nabla_x g + \frac{1}{2\eta} y^{\top} y.
    \end{split}
\end{align}
Indeed, the (unique) Nash equilibrium of the Game~\eqref{eqn:localgame} can be computed in closed form.
\begin{theorem}
    \label{thm:uniqueNash}
    Among all (possibly randomized) strategies with finite first moment, the only Nash equilibrium of the Game~\eqref{eqn:localgame} is given by
    \begin{align}
    \label{eqn:nash}
    &x = -\eta \left( \Id - \eta^2 D_{xy}^2f D_{yx}^2 g \right)^{-1}  
                \left( \nabla_{x} f - \eta D_{xy}^2f  \nabla_{y} g \right) \\
    &y = -\eta \left( \Id - \eta^2 D_{yx}^2g D_{xy}^2 f \right)^{-1}  
                \left( \nabla_{y} g - \eta D_{yx}^2g  \nabla_{x} f \right),
    \end{align}
    given that the matrix inverses in the above expression exist. \footnote{We note that the matrix inverses exist for almost all values of $\eta$, and for all $\eta$ in the case of a zero sum game.}
\end{theorem}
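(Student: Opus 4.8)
The plan is to show that randomization is never advantageous, so that the search for a Nash equilibrium collapses onto the solution of a pair of coupled first-order conditions, which then yields \eqref{eqn:nash} by elementary linear algebra. Fix a candidate equilibrium consisting of random strategies $x$ and $y$; since each player randomizes on their own, $x$ and $y$ are independent. The expected cost of the first player therefore factors as
\begin{equation}
    \Expect[x]^{\top} \nabla_x f + \Expect[x]^{\top} D_{xy}^2 f\, \Expect[y] + \Expect[y]^{\top} \nabla_y f + \frac{1}{2\eta} \Expect[x^{\top} x],
\end{equation}
with the symmetric expression for the second player. The only term sensitive to the dispersion of $x$ is the penalty, and the decomposition $\Expect[x^{\top}x] = \|\Expect[x]\|^2 + \operatorname{tr}(\Cov[x])$ shows it is strictly minimized, among all strategies with a given mean, by the deterministic strategy concentrated at $\Expect[x]$. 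First I would use this to rule out genuine randomization: a strategy with infinite second moment yields infinite expected cost (the finite-first-moment hypothesis keeps the remaining, linear-in-mean, terms well defined), so it is strictly dominated by any deterministic strategy, and a strategy with finite but positive covariance is strictly improved by collapsing onto its mean. Hence in any Nash equilibrium both players must play deterministically, and it suffices to characterize the pure-strategy equilibria.

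Next I would treat the reduced, deterministic problem. For fixed deterministic $x,y$, each player's objective is a strictly convex quadratic in their own variable, with Hessian $\eta^{-1}\Id \succ 0$, so the best-response conditions are exactly the vanishing of the respective gradients,
\begin{align}
    \nabla_x f + D_{xy}^2 f\, y + \tfrac{1}{\eta}\, x = 0, \qquad \nabla_y g + D_{yx}^2 g\, x + \tfrac{1}{\eta}\, y = 0,
\end{align}
and strict convexity makes these conditions both necessary and sufficient for the simultaneous best-response (Nash) property. Solving the first equation for $x$, substituting into the second (and symmetrically), and collecting terms produces $\left(\Id - \eta^2 D_{xy}^2 f\, D_{yx}^2 g\right) x = -\eta\left(\nabla_x f - \eta D_{xy}^2 f\, \nabla_y g\right)$ together with its counterpart for $y$; inverting the indicated matrices gives precisely \eqref{eqn:nash}, and their invertibility makes this solution unique.

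The linear-algebra manipulations above are routine, so the hard part will be the randomization argument rather than the solve. In particular, I expect the main subtlety to be the moment bookkeeping: justifying that finiteness of the first moment is enough to make the non-penalty terms well defined while still allowing the penalty to detect and charge any excess second moment, and confirming that independence of $x$ and $y$ is the correct modeling assumption for mixed strategies, so that the cross term $\Expect[x^{\top} D_{xy}^2 f\, y]$ genuinely factors through the means. Once these points are settled, strict convexity delivers the rest: it guarantees uniqueness of each best response and hence that the deterministic solution of the coupled first-order system is the unique Nash equilibrium among all strategies with finite first moment.
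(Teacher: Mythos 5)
Your proposal is correct and follows essentially the same route as the paper: decompose the expected penalty into squared mean plus variance to show any Nash equilibrium must be deterministic, then solve the two coupled first-order conditions of the resulting strictly convex quadratic best-response problems. You merely spell out a few details the paper leaves implicit (independence of the players' randomizations, the infinite-second-moment case, and strict convexity guaranteeing that the stationarity conditions characterize the unique best responses).
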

\begin{proof}
Let $X,Y$ be randomized strategies. By subtracting and adding $\Expect[X]^2/(2\eta), \Expect[Y]^2/(2\eta)$, and taking expectations, we can rewrite the 
game as 
\begin{align}
    &\min_{\Expect[X] \in \Reals^m}  \Expect[X]^{\top} \nabla_x f + \Expect[X]^{\top} D_{xy}^2 f \Expect[Y] + 
    \Expect[Y]^{\top} \nabla_y f + \frac{1}{2\eta} \Expect[X]^{\top} \Expect[X] + \frac{1}{2\eta} \Var[X]\\
    &\min_{\Expect[Y] \in \Reals^n}  \Expect[Y]^{\top} \nabla_y g + \Expect[Y]^{\top} D_{yx}^2 g \Expect[X] + \Expect[X]^{\top} \nabla_x g + \frac{1}{2\eta} \Expect[Y]^{\top} \Expect[Y] + \frac{1}{2\eta} \Var[Y].
\end{align}
Thus, the objective value for both players can always be improved by decreasing the variance while keeping the expectation the same, meaning that the optimal value will always (and only) be achieved by a deterministic strategy.
We can then replace the $\Expect[X], \Expect[Y]$ with $x,y$, set the derivative of the first 
expression with respect to $x$ and of the second expression with respect to $y$ to zero, and solve the resulting system of two equations for the Nash equilibrium $(x,y)$.
\end{proof}
According to Theorem~\ref{thm:uniqueNash}, the Game~\eqref{eqn:localgame} has exactly one optimal pair of strategies, which is deterministic.
Thus, we can use these strategies as an update rule, generalizing the idea of local optimality from the single-- to the multi agent setting and obtaining Algorithm~\ref{alg:CGD}.

\textbf{What I think that they think that I think ... that they do}: Another game-theoretic interpretation of CGD follows from the observation that its update rule can be written as 
\begin{equation}
\label{eqn:whatIthink}
    \begin{pmatrix}
         \Delta x\\
         \Delta y
    \end{pmatrix}
    =  
    -
    \begin{pmatrix}
        \Id        & \eta D_{xy}^2 f \\
        \eta D_{yx}^2 g & \Id        
    \end{pmatrix}^{-1}
    \begin{pmatrix}
         \nabla_{x} f\\
         \nabla_{y} g
    \end{pmatrix}.
\end{equation}
Applying the expansion $ \lambda_{\max} (A) < 1 \Rightarrow \left( \Id - A \right)^{-1} = \lim_{N \rightarrow \infty} \sum_{k=0}^{N} A^k$ to the above equation, we observe that the first partial sum ($N = 0$) corresponds to the optimal strategy if the other player's strategy stays constant (GDA). 
The second partial sum ($N = 1$) corresponds to the optimal strategy if the other player thinks that the other player's strategy stays constant (LCGD, see Figure~\ref{fig:ingredients}).
The third partial sum ($N = 2$) corresponds to the optimal strategy if the other player thinks that the other player thinks that the other player's strategy stays constant, and so forth, until the Nash equilibrium is recovered in the limit.
For small enough $\eta$, we could use the above series expansion to solve for $(\Delta x, \Delta y)$, which is known as Richardson iteration and would recover high order LOLA \citep{foerster2018learning}.
However, expressing it as a matrix inverse will allow us to use optimal Krylov subspace methods to obtain far more accurate solutions with fewer gradient evaluations.\\

\textbf{Rigorous results on convergence and local stability:} We will now show some basic convergence results for CGD, the proofs of which we defer to the appendix. 
Our results are restricted to the case of a zero-sum game ($f = -g$), but we expect that they can be extended to games that are dominated by competition.
To simplify notation, we define
\begin{equation}
    \bar{D} \defeq (\Id + \eta^2 D_{xy}^2f D_{yx}^2f)^{-1} \eta^2 D_{xy}^2f D_{yx}^2f, \quad
    \tilde{D} \defeq (\Id + \eta^2 D_{yx}^2f D_{xy}^2f)^{-1} \eta^2 D_{yx}^2f D_{xy}^2f.
\end{equation}
We furthermore define the spectral function $\phi(\lambda) \defeq 2 \lambda - |\lambda|$ .
\begin{theorem}
    \label{thm:convergence}
    If $f$ is two times differentiable with $L$-Lipschitz continuous Hessian and the diagonal blocks of its Hessian are bounded as $\eta \|D_{xx}^2 f\|, \eta \|D_{yy}^2 f\| \leq  1/18$, we have 
    \begin{align*}
    &\left\| \nabla_x f\left(x + x_k, y + y_k \right)\right\|^2 + \left\|\nabla_y f\left(x + x_k, y + y_k \right)\right\|^2 
    -\|\nabla_x f\|^2 - \|\nabla_y f\|^2 \\
    \leq& - \nabla_x f^{\top} \left(2 \eta h_{\pm}\left(D_{xx}^2 f\right) + \frac{1}{3} \bar{D} - 32\eta^2 L\left(\|\nabla_x f\| + \|\nabla_y f\|\right) - 768 \eta^4 L^2 \right) \nabla_x f    \\
    &  - \nabla_y f^{\top} \left(2 \eta h_{\pm}\left(- D_{yy}^2 f\right) + \frac{1}{3} \tilde{D} - 32\eta^2 L \left(\|\nabla_x f\| + \|\nabla_y f\|\right) - 768 \eta^4 L^2 \right)\nabla_y f  \\
    \end{align*}
\end{theorem}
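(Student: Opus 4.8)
The plan is to track the squared gradient norm $\Phi \defeq \|\nabla_x f\|^2 + \|\nabla_y f\|^2$ as a Lyapunov function and to bound its change after one CGD step by a second-order Taylor expansion. I specialize to the zero-sum case $g = -f$ and abbreviate $C \defeq D_{xy}^2 f$ (so $D_{yx}^2 f = C^\top$), $P \defeq (\Id + \eta^2 C C^\top)^{-1}$ and $Q \defeq (\Id + \eta^2 C^\top C)^{-1}$. By Theorem~\ref{thm:uniqueNash} the step is $\Delta x = -\eta P(\nabla_x f + \eta C \nabla_y f)$ and $\Delta y = \eta Q(\nabla_y f - \eta C^\top \nabla_x f)$, and I would repeatedly use the push-through identities $PC = CQ$, $C^\top P = Q C^\top$ together with $\bar{D} = \Id - P = \eta^2 C Q C^\top$ and $\tilde{D} = \Id - Q = \eta^2 C^\top P C$. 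A first useful observation is that, because $\|\eta C Q\|, \|\eta Q C^\top\| \le 1/2$ regardless of $\|C\|$, one gets the a priori bound $\|\Delta z\| \lesssim \eta(\|\nabla_x f\| + \|\nabla_y f\|)$ uniformly in the interaction strength, which is the quantitative form of the paper's central robustness claim. Writing $z_k = (x_k, y_k)$, $\Delta z = (\Delta x, \Delta y)$, $\nabla f = (\nabla_x f, \nabla_y f)$, $H$ for the Hessian of $f$ at $z_k$, and using $L$-Lipschitzness of $\nabla^2 f$ to get $\nabla f(z_k + \Delta z) = \nabla f + H \Delta z + R$ with $\|R\| \le \tfrac{L}{2}\|\Delta z\|^2$, the target reduces to controlling
\[
\Phi(z_k + \Delta z) - \Phi(z_k) = 2\,\nabla f^\top H \Delta z + 2\,\nabla f^\top R + \|H\Delta z + R\|^2 .
\]

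The core algebraic step is to split $H$ into its off-diagonal part $\left(\begin{smallmatrix} 0 & C \\ C^\top & 0 \end{smallmatrix}\right)$ and its diagonal part $\left(\begin{smallmatrix} D_{xx}^2 f & 0 \\ 0 & D_{yy}^2 f \end{smallmatrix}\right)$. For the off-diagonal part the two cross terms cancel exactly: substituting the step and using $PC = CQ$ gives $\nabla_x f^\top C \Delta y + \nabla_y f^\top C^\top \Delta x = -\nabla_x f^\top \bar{D} \nabla_x f - \nabla_y f^\top \tilde{D} \nabla_y f$, so the interaction contributes $-2\bar{D}$, $-2\tilde{D}$ to the descent; this is the precise sense in which interaction can only help. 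For the diagonal part, substituting $P = \Id - \bar{D}$ and $Q = \Id - \tilde{D}$ isolates the clean leading terms $-2\eta\,\nabla_x f^\top D_{xx}^2 f\,\nabla_x f + 2\eta\,\nabla_y f^\top D_{yy}^2 f\,\nabla_y f$ plus corrections controlled by $\eta\|D_{xx}^2 f\|, \eta\|D_{yy}^2 f\| \le 1/18$. Passing from $D_{xx}^2 f$ to $h_\pm(D_{xx}^2 f)$ (and from $-D_{yy}^2 f$ to $h_\pm(-D_{yy}^2 f)$) is then a deliberate relaxation: since $\phi(\lambda) = 2\lambda - |\lambda|$ fixes nonnegative eigenvalues and triples negative ones, the matrix $\eta\bigl(h_\pm(D_{xx}^2 f) - D_{xx}^2 f\bigr) \preceq 0$ supplies exactly the extra negative room in the non-convex directions that is needed to absorb positive error terms there.

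Finally I would dominate the two error contributions. The same taming of $\|C\|$ by $P,Q$ that bounded $\|\Delta z\|$ also gives $\|H\Delta z\| \lesssim \|\nabla_x f\| + \|\nabla_y f\|$ uniformly in the interaction, and using the identities above together with $\bar{D}, \tilde{D} \preceq \Id$, the interaction-order part of $\|H\Delta z\|^2$ reduces to $\nabla_x f^\top \bar{D} \nabla_x f + \nabla_y f^\top \tilde{D} \nabla_y f$ plus $\nabla_x f$--$\nabla_y f$ cross terms. Combined with the leading off-diagonal descent $-2\bar{D}, -2\tilde{D}$ this still leaves a net negative multiple of $\bar{D}, \tilde{D}$, of which I retain $\tfrac13\bar{D}, \tfrac13\tilde{D}$ for the stated bound and spend the surplus $\succeq 0$, via Young's inequality, on the cross terms. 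The Lipschitz remainder $R$ and the residual $O(\eta)$ pieces are then bounded in operator norm using $\|\Delta z\| \lesssim \eta\|\nabla f\|$ and $\|R\| \le \tfrac{L}{2}\|\Delta z\|^2 \lesssim \eta^2 L\|\nabla f\|^2$, split partly against the $h_\pm$ slack in the non-convex directions and partly collected as the operator-norm terms $-32\eta^2 L(\|\nabla_x f\| + \|\nabla_y f\|)$ and $-768\eta^4 L^2$.

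The main obstacle is this last bookkeeping step. Because $D_{xx}^2 f$ does not commute with $P$ (nor $D_{yy}^2 f$ with $Q$), there is no simultaneous diagonalization, so every cross term — diagonal against off-diagonal in $2\nabla f^\top H\Delta z$, the $2\nabla f^\top R$ term, and the mixed terms in $\|H\Delta z + R\|^2$ — must be estimated in operator norm and then split by Young's inequality in precisely the right proportion to land on the constants $1/3$, $32$, $768$. The smallness hypothesis $\eta\|D_{xx}^2 f\|, \eta\|D_{yy}^2 f\| \le 1/18$ is exactly what keeps the corrections from $P, Q \neq \Id$ and the second-order terms inside the available descent budget; verifying that the chosen numerical constants close the argument is the genuinely delicate part, the rest being the clean cancellations above.
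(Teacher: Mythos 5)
Your plan follows essentially the same route as the paper's proof: the same Lyapunov function $\|\nabla_x f\|^2+\|\nabla_y f\|^2$, the same Taylor expansion with Lipschitz remainder, the same cancellation of the interaction terms via the fixed-point equations of the update and the push-through identity $PC=CQ$ (yielding the net negative multiple of $\bar{D}$, $\tilde{D}$), the same Peter--Paul bookkeeping, and the same role for $h_{\pm}$ and the bound $\eta\|D_{xx}^2f\|,\eta\|D_{yy}^2f\|\leq 1/18$. The key identities you state are correct, and the one part you leave open --- the constant-chasing that lands on $1/3$, $32$, and $768$ --- is precisely the part the paper itself handles by direct (and rather terse) estimation, so your outline neither diverges from nor falls short of the published argument.
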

Under suitable assumptions on the curvature of $f$, Theorem~\ref{thm:convergence} implies results on the convergence of CGD.
\begin{corollary}
    Under the assumptions of Theorem~\ref{thm:convergence}, if for $\alpha > 0$
    \begin{align*}
    \left(2 \eta h_{\pm}\left(D_{xx}^2 f\right) + \frac{1}{3} \bar{D} - 32\eta^2 L\left(\|\nabla_x f(x_0, y_0)\| + \|\nabla_y f(x_0, y_0)\|\right) - 768 \eta^4 L^2 \right) &\succeq \alpha \Id \\
     \left(2 \eta h_{\pm}\left(- D_{yy}^2 f\right) + \frac{1}{3} \tilde{D} - 32\eta^2 L \left(\|\nabla_x f(x_0, y_0)\| + \|\nabla_y f(x_0, y_0)\|\right) - 768 \eta^4 L^2 \right)  &\succeq \alpha \Id
    \end{align*}
    for all $(x,y) \in \Reals^{m + n}$, then CGD started in $(x_0, y_0)$ converges at exponential rate with exponent $\alpha$ to a critical point.
\end{corollary}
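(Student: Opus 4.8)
The plan is to read Theorem~\ref{thm:convergence} as a one-step decrease estimate for the merit function $N_k \defeq \|\nabla_x f(x_k,y_k)\|^2 + \|\nabla_y f(x_k,y_k)\|^2$ and then to iterate it. Since the vector $(x,y)$ appearing in Theorem~\ref{thm:convergence} is the CGD update of Theorem~\ref{thm:uniqueNash}, the point $(x+x_k,y+y_k)$ is exactly the next iterate $(x_{k+1},y_{k+1})$, so the theorem reads
\begin{equation}
    N_{k+1} - N_k \le - \nabla_x f^{\top} M_x \nabla_x f - \nabla_y f^{\top} M_y \nabla_y f,
\end{equation}
where $M_x, M_y$ abbreviate the two symmetric matrices on the right-hand side of Theorem~\ref{thm:convergence}, with all curvature terms and all gradient norms evaluated at $(x_k,y_k)$. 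If at step $k$ we knew $M_x \succeq \alpha \Id$ and $M_y \succeq \alpha \Id$, the right-hand side would be at most $-\alpha N_k$, giving the geometric contraction $N_{k+1} \le (1-\alpha) N_k$.

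Next I would set up an induction whose entire content is that this spectral bound is available at every iterate. The curvature part comes for free: the corollary hypothesizes the two inequalities for all $(x,y)\in\Reals^{m+n}$, so in particular they hold at $(x_k,y_k)$, but with the gradient penalty $-32\eta^2 L(\|\nabla_x f(x_0,y_0)\| + \|\nabla_y f(x_0,y_0)\|)$ frozen at the initial point. Hence $M_x \succeq \alpha\Id$ and $M_y \succeq \alpha\Id$ at $(x_k,y_k)$ as soon as the true penalty there is no larger than the frozen one, i.e. as soon as $\|\nabla_x f(x_k,y_k)\| + \|\nabla_y f(x_k,y_k)\| \le \|\nabla_x f(x_0,y_0)\| + \|\nabla_y f(x_0,y_0)\|$. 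The base case $k=0$ is immediate, since there the frozen penalty coincides with the actual one, so Theorem~\ref{thm:convergence} applies directly and yields $N_1 \le (1-\alpha) N_0$.

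With the contraction closed along the trajectory I would extract both claimed conclusions. Iterating gives $N_k \le (1-\alpha)^k N_0 \le e^{-\alpha k} N_0$, which is the advertised exponential rate with exponent $\alpha$ for the squared gradient norm. To upgrade convergence of the gradient to convergence of the iterates to a critical point, I would use the representation~\eqref{eqn:whatIthink}: the update $(\Delta x_k,\Delta y_k)$ is the negative of the inverse block matrix in~\eqref{eqn:whatIthink} applied to the stacked gradient, and in the zero-sum case that block matrix is invertible with uniformly bounded inverse under the standing assumptions (footnote after Theorem~\ref{thm:uniqueNash}), so $\|(\Delta x_k,\Delta y_k)\| \le C\sqrt{N_k} \le C\sqrt{N_0}\,(1-\alpha)^{k/2}$. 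The step lengths are therefore summable, the iterates form a Cauchy sequence converging to some $(x_\infty,y_\infty)$, and continuity of $\nabla f$ together with $N_k \to 0$ forces $\nabla f(x_\infty,y_\infty)=0$.

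The main obstacle is precisely the inductive step, namely keeping the frozen gradient penalty a valid upper bound along the whole trajectory. The contraction controls the sum of squares $N_k$, whereas the penalty in Theorem~\ref{thm:convergence} involves the sum of norms $\|\nabla_x f(x_k,y_k)\| + \|\nabla_y f(x_k,y_k)\|$; passing from a bound on $N_k$ to a bound on the sum of norms costs a constant factor (at most $\sqrt{2}$, since $a+b\le\sqrt{2}\sqrt{a^2+b^2}$) that must be absorbed, either by the generous numerical constants in Theorem~\ref{thm:convergence} or by tracking each norm individually through $\|\nabla_x f(x_k,y_k)\|,\ \|\nabla_y f(x_k,y_k)\| \le \sqrt{N_k}\le\sqrt{N_0}$. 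This is where I would concentrate the effort; the curvature hypothesis, being quantified over all of $\Reals^{m+n}$, raises no difficulty once the trajectory is confined to the sublevel set $\{N \le N_0\}$.
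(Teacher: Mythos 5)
The paper itself gives no proof of this corollary (the appendix only proves Theorems 2.3 and 2.4), so there is nothing to compare against except the argument the authors evidently have in mind, and your architecture is exactly that: read Theorem~\ref{thm:convergence} as a one-step decrease of $N_k$, propagate the spectral lower bound $M_x,M_y\succeq\alpha\Id$ inductively along the trajectory, conclude $N_k\le(1-\alpha)^kN_0$, and then use the bound $\|(\Delta x_k,\Delta y_k)\|\le C\eta\sqrt{N_k}$ (which holds in the zero-sum case since $\|(\Id+\bM)^{-1}\|\le1$ and $\|(\Id+\bM)^{-1}\eta D_{xy}^2f\|\le\tfrac12$) to get summable steps, a Cauchy sequence, and a critical limit point. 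You also correctly isolate the one genuine wrinkle, the mismatch between the $\ell^2$ quantity $N_k$ that the recursion controls and the $\ell^1$ quantity $\|\nabla_xf\|+\|\nabla_yf\|$ appearing in the frozen penalty. But of your two proposed patches, the first one fails: bounding each norm individually by $\sqrt{N_k}\le\sqrt{N_0}$ gives $\|\nabla_xf(x_k,y_k)\|+\|\nabla_yf(x_k,y_k)\|\le2\sqrt{N_0}$, which is \emph{weaker} than the Cauchy--Schwarz bound $\sqrt2\sqrt{N_0}$ and still exceeds $\|\nabla_xf(x_0,y_0)\|+\|\nabla_yf(x_0,y_0)\|$ in general (take $\nabla_yf(x_0,y_0)=0$). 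The second patch is the one that works, and it can be made precise rather than left as a hope: in the proof of Theorem~\ref{thm:convergence} the term $32\eta^2L(\|a\|+\|b\|)$ actually arises from the estimate $2a^\top\mathcal{R}_x+2b^\top\mathcal{R}_y\le16\eta^2L(\|a\|+\|b\|)(\|a\|^2+\|b\|^2)$, so a factor of $2$ of slack is available, and since $\|a_k\|+\|b_k\|\le\sqrt2\sqrt{N_k}\le\sqrt2\sqrt{N_0}\le\sqrt2\left(\|a_0\|+\|b_0\|\right)\le2\left(\|a_0\|+\|b_0\|\right)$, the frozen penalty in the corollary's hypothesis dominates the true penalty at every iterate and the induction closes. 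With that substitution your argument is complete; without it, the inductive step is not justified as written.
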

Furthermore, we can deduce the following local stability result.
\begin{theorem}
    Let $(x^*,y^*)$ be a critical point ($(\nabla_{x} f, \nabla_{y} f) = (0,0)$) and assume furthermore that
    \small
    \begin{equation*}
    \lambda_{\min} \defeq \min \left(\lambda_{\min} \left(2\eta h_{\pm}\left(D_{xx}^2f\right)  + \frac{1}{3}\bar{D} - 768\eta^4 L^2\right), \lambda_{\min} \left(2\eta h_{\pm}\left(- D_{yy}^2 f\right) + \frac{1}{3} \tilde{D} - 768\eta^4 L^2\right) \right) > 0 
    \end{equation*}
    \normalsize
    and $f \in C^2(\mathbb{R}^{m + n})$ with Lipschitz continuous Hessian.  
    Then there exists a neighbourhood $\mathcal{U}$ of $(x^*,y^*)$, such that CGD started in $(x_{1},y_{1}) \in \mathcal{U}$ converges to a point in $\mathcal{U}$ at an exponential rate that depends only on $\lambda_{\min}$.
\end{theorem}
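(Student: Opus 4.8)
The plan is to use the squared gradient norm as a Lyapunov function and to reduce the local statement to the one-step estimate already furnished by Theorem~\ref{thm:convergence}. Define
\[
V(x,y) \defeq \|\nabla_x f(x,y)\|^2 + \|\nabla_y f(x,y)\|^2 ,
\]
which vanishes exactly at critical points, and in particular at $(x^*,y^*)$. The key observation is that the \emph{only} difference between the hypotheses of the global Corollary and the present local statement is the term $32\eta^2 L(\|\nabla_x f\| + \|\nabla_y f\|)$ appearing in Theorem~\ref{thm:convergence}: globally it must be dominated uniformly, whereas near a critical point the gradients are small, so this term becomes negligible. Everything then rests on converting the contraction of $V$ into convergence of the iterates while ensuring they stay where the estimate is valid.

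First I would invoke continuity. Since $f \in C^2$ with Lipschitz Hessian, the maps $(x,y) \mapsto D_{xx}^2 f,\ D_{yy}^2 f,\ \bar{D},\ \tilde{D}$ are continuous (in the zero-sum case $\Id + \eta^2 D_{xy}^2 f D_{yx}^2 f \succeq \Id$ is always invertible, so $\bar{D},\tilde{D}$ are well defined and depend continuously on the Hessian), and $h_{\pm}$, being a spectral function induced by the continuous map $\phi$, is continuous in its symmetric matrix argument. Hence
\[
M_1 \defeq 2\eta\, h_{\pm}(D_{xx}^2 f) + \tfrac{1}{3}\bar{D} - 768\eta^4 L^2 \Id, \qquad M_2 \defeq 2\eta\, h_{\pm}(-D_{yy}^2 f) + \tfrac{1}{3}\tilde{D} - 768\eta^4 L^2 \Id
\]
depend continuously on $(x,y)$, and by hypothesis $\lambda_{\min}(M_1), \lambda_{\min}(M_2) \geq \lambda_{\min} > 0$ at $(x^*,y^*)$. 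So there is a radius $r>0$ on whose ball $\mathcal{B} \defeq B_r(x^*,y^*)$ we have $M_1, M_2 \succeq \tfrac{3}{4}\lambda_{\min}\Id$ and, using $\nabla f(x^*,y^*)=0$, also $32\eta^2 L(\|\nabla_x f\| + \|\nabla_y f\|) \leq \tfrac{1}{4}\lambda_{\min}$. On $\mathcal{B}$, Theorem~\ref{thm:convergence} then gives the strict one-step decrease
\[
V(x_{k+1},y_{k+1}) \leq \Bigl(1 - \tfrac{1}{2}\lambda_{\min}\Bigr) V(x_k,y_k),
\]
i.e.\ geometric contraction of $V$ at a rate controlled solely by $\lambda_{\min}$.

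Next I would turn contraction of $V$ into control of the trajectory. Writing the update as in~\eqref{eqn:whatIthink}, the inverse block matrix is uniformly bounded on $\mathcal{B}$ (the Hessian is bounded there and the relevant Schur complement is $\succeq \Id$), so the step obeys $\|(x_{k+1}-x_k,\, y_{k+1}-y_k)\| \leq C\sqrt{V(x_k,y_k)}$ for a constant $C$ depending only on $\eta$ and the Hessian bounds. Iterating the decrease gives $\sqrt{V(x_k,y_k)} \leq (1-\tfrac12\lambda_{\min})^{k/2}\sqrt{V(x_1,y_1)}$, so the cumulative displacement $\sum_k \|(x_{k+1}-x_k,\, y_{k+1}-y_k)\|$ is dominated by a convergent geometric series proportional to $\sqrt{V(x_1,y_1)}$.

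The main obstacle, and the only genuinely delicate point, is the standard bootstrapping issue: the one-step estimate holds only while the iterates remain in $\mathcal{B}$, so I must establish invariance before I am allowed to iterate it. I would resolve this by induction on $k$: choose the initial neighbourhood $\mathcal{U} \subset \mathcal{B}$ small enough that $C\sqrt{V(x_1,y_1)}\sum_{j\geq 0}(1-\tfrac12\lambda_{\min})^{j/2}$ is smaller than the distance from $\mathcal{U}$ to $\partial\mathcal{B}$; then each partial displacement keeps $(x_k,y_k)$ inside $\mathcal{B}$, which in turn validates the next application of the estimate. The summability of the step lengths simultaneously shows that $(x_k,y_k)$ is Cauchy, hence convergent to some $(x_\infty,y_\infty)\in\mathcal{U}$; since $V(x_k,y_k)\to 0$ and $V$ is continuous, the limit is a critical point, and the geometric decay of $V$ (equivalently of $\|(x_k,y_k)-(x_\infty,y_\infty)\|$) yields the claimed exponential rate depending only on $\lambda_{\min}$.
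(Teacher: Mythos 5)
Your proposal is correct and follows essentially the same route as the paper's proof: use continuity of the gradient and Hessian to obtain, in a neighbourhood of the critical point, a one-step geometric contraction of the (squared) gradient norm from Theorem~\ref{thm:convergence} (the $32\eta^2 L(\|\nabla_x f\|+\|\nabla_y f\|)$ term being absorbed since the gradient vanishes at $(x^*,y^*)$), and then invoke summability of the geometrically decaying step lengths to get forward invariance of the neighbourhood, the Cauchy property, and the exponential rate. The paper states this argument in two sentences; you have merely filled in the details it leaves implicit (the bound $\|(x_{k+1}-x_k,y_{k+1}-y_k)\|\leq C\sqrt{V(x_k,y_k)}$ from~\eqref{eqn:whatIthink} and the induction establishing invariance), which is a faithful elaboration rather than a different approach.
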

The results on local stability for existing modifications of GDA, including those of \citep{mescheder2017numerics, daskalakis2017training,mertikopoulos2019optimistic} (see also \cite{liang2018interaction}) all require the stepsize to be chosen inversely proportional to an upper bound on $\sigma_{\max} (D_{xy}^2f)$ and indeed we will see in our experiments that the existing methods are prone to divergence under strong interactions between the two players (large $\sigma_{\max}(D_{xy}^2f)$).
In contrast to these results, our convergence results \emph{only improve} as the interaction between the players becomes stronger.

\textbf{Why not use $D_{xx}^2f$ and $D_{yy}^2g$?:}
The use of a bilinear approximation that contains some, but not all second order terms is unusual and begs the question why we do not include the diagonal blocks of the Hessian in Equation~\eqref{eqn:whatIthink} resulting in the damped and regularized Newton's method
\begin{equation}
\label{eqn:newton}
    \begin{pmatrix}
         \Delta x\\
         \Delta y
    \end{pmatrix}
    =  
    -
    \begin{pmatrix}
        \Id + \eta D_{xx}^2 f & \eta D_{xy}^2 f \\
        \eta D_{yx}^2 g & \Id + \eta D_{yy}^2 g
    \end{pmatrix}^{-1}
    \begin{pmatrix}
         \nabla_{x} f\\
         \nabla_{y} g
    \end{pmatrix}.
\end{equation}
For the following reasons we believe that the bilinear approximation is preferable both from a practical and conceptual point of view.
\begin{itemize}[wide, labelwidth=!, labelindent=0pt]
    \item \emph{Conditioning of matrix inverse:} One advantage of competitive gradient descent is that in many cases, including all zero-sum games, the condition number of the matrix inverse in Algorithm~\ref{alg:CGD} is bounded above by $\eta^2 \|D_{xy}\|^2$.
    If we include the diagonal blocks of the Hessian in a non-convex-concave problem, the matrix can even be singular as soon as $\eta \|D_{xx}^2 f\| \geq 1$  or $\eta \|D_{yy}^2 g\| \geq 1$.
    \item \emph{Irrational updates:} We can only expect the update rule~\eqref{eqn:newton} to correspond to a local Nash equilibrium if the problem is convex-concave or $\eta \|D_{xx}^2 f\|, \eta \|D_{yy}^2 g\| < 1$.
    If these conditions are violated it can instead correspond to the players playing their \emph{worst} as opposed to best strategy based on the quadratic approximation, leading to behavior that contradicts the game-interpretation of the problem.
    \item \emph{Lack of regularity:} For the inclusion of the diagonal blocks of the Hessian to be helpful at all, we need to make additional assumptions on the regularity of $f$, for example by bounding the Lipschitz constants of $D_{xx}^2 f$ and $D_{yy}^2g$. 
    Otherwise, their value at a given point can be totally uninformative about the global structure of the loss functions (consider as an example the minimization of $x \mapsto x^2 + \epsilon^{3/2} \sin(x/\epsilon)$ for $\epsilon \ll 1$).
    Many problems in competitive optimization, including GANs, have the form $f(x,y) = \Phi(\mathcal{G}(x), \mathcal{D}(y)), g(x,y) = \Theta(\mathcal{G}(x), \mathcal{D}(y))$, where $\Phi, \Theta$ are \emph{smooth} and \emph{simple}, but $\mathcal{G}$ and $\mathcal{D}$ might only have first order regularity.
    In this setting, the bilinear approximation has the advantage of fully exploiting the first order information of $\mathcal{G}$ and $\mathcal{D}$, without assuming them to have higher order regularity.
    This is because the bilinear approximations of $f$ and $g$ then contains only the first derivatives of $\mathcal{G}$ and $\mathcal{D}$, while the quadratic approximation contains the second derivatives $D_{xx}^2 \mathcal{G}$ and $D_{yy}^2 \mathcal{D}$ and therefore needs stronger regularity assumptions on $\mathcal{G}$ and $\mathcal{D}$ to be effective.
    \item \emph{No spurious symmetry:} One reason to favor full Taylor approximations of a certain order in single-player optimization is that they are invariant under changes of the coordinate system.
    For competitive optimization, a change of coordinates of $(x,y) \in \Reals^{m +n}$ can correspond, for instance, to taking a decision variable of one player and giving it to the other player.
    This changes the underlying game significantly and thus we do \emph{not} want our approximation to be invariant under this transformation.
    Instead, we want our local approximation to only be invariant to coordinate changes of $x \in \Reals^m$ and $y \in \Reals^{n}$ \emph{in separation}, that is to block-diagonal coordinate changes on $\Reals^{m+n}$.
    \emph{Mixed} order approximations (bilinear, biquadratic, etc.) have exactly this invariance property and thus are the natural approximation for two-player games.
\end{itemize}
While we are convinced that the right notion of first order competitive optimization is given by quadratically regularized bilinear approximations, we believe that the right notion of second order competitive optimization is given by \emph{cubically} regularized \emph{biquadratic} approximations, in the spirit of \cite{nesterov2006cubic}.


\section{Consensus, optimism, or competition?}
\label{sec:comparison}
We will now show that many of the convergent modifications of GDA correspond to different subsets of four common ingredients.
\emph{Consensus optimization} (ConOpt) \citep{mescheder2017numerics}, penalises the players for non-convergence by adding the squared norm of the gradient at the next location, $\gamma \|\nabla_x f(x_{k+1},y_{k+1}), \nabla_x f(x_{k+1},y_{k+1})\|^2$ to both player's loss function (here $\gamma \geq 0$ is a hyperparameter).
As we see in Figure~\ref{fig:ingredients}, the resulting gradient field has two additional Hessian corrections. 
\cite{balduzzi2018mechanics,letcher2019differentiable} observe that any game can be written as the sum of a \emph{potential game} (that is easily solved by GDA), and a \emph{Hamiltonian game} (that is easily solved by ConOpt). 
Based on this insight, they propose \emph{symplectic gradient adjustment} that applies (in its simplest form) ConOpt only using the skew-symmetric part of the Hessian, thus alleviating the problematic tendency of ConOpt to converge to spurious solutions.
The same algorithm was independently discovered by \cite{gemp2018global}, who also provide a detailed analysis in the case of linear-quadratic GANs.\\
\cite{daskalakis2017training} proposed to modify GDA as
\begin{align}
    \label{eqn:updateOGDA}
    \Delta x &= - \left( \nabla_x f(x_{k},y_{k}) 
        + \left( \nabla_x f(x_{k},y_{k}) - \nabla_x f(x_{k-1},y_{k-1}) \right) \right) \\
    \Delta y &= - \left( \nabla_y g(x_{k},y_{k}) 
        + \left( \nabla_y g(x_{k},y_{k}) - \nabla_y g(x_{k-1},y_{k-1}) \right) \right),
\end{align}
which we will refer to as optimistic gradient descent ascent (OGDA).
By interpreting the differences appearing in the update rule as finite difference approximations to Hessian vector products, we see that (to leading order) OGDA corresponds to yet another second order correction of GDA (see Figure~\ref{fig:ingredients}).
It will also be instructive to compare the algorithms to linearized competitive gradient descent (LCGD), which is obtained by skipping the matrix inverse in CGD (which corresponds to taking only the leading order term in the limit $\eta D_{xy}^2f \rightarrow 0$) and also coincides with first order LOLA \citep{foerster2018learning}.
As illustrated in Figure~\ref{fig:ingredients}, these six algorithms amount to different subsets of the following four terms.
\begin{figure}
     \begin{align*}
       & \text{GDA: } &\Delta x =  &&&- \nabla_x f&\\
       & \text{LCGD: } &\Delta x =  &&&- \nabla_x f& &-\eta D_{xy}^2 f \nabla_y f&\\
       & \text{SGA: } &\Delta x =  &&&- \nabla_x f& &- \gamma D_{xy}^2 f \nabla_y f&  & & \\
       & \text{ConOpt: } &\Delta x =  &&&- \nabla_x f& &- \gamma D_{xy}^2 f \nabla_y f&  &- \gamma D_{xx}^2 f \nabla_x f& \\
       & \text{OGDA: } &\Delta x \approx &&&- \nabla_x f& &-\eta D_{xy}^2 f \nabla_y f&  &+\eta D_{xx}^2 f \nabla_x f& \\
       & \text{CGD: } &\Delta x = &\left(\Id + \eta^2 D_{xy}^2 f D_{yx}^2 f\right)^{-1}&\bigl( &- \nabla_x f&  &-\eta D_{xy}^2 f \nabla_y f& & & \bigr)
     \end{align*}
    \caption{\label{fig:ingredients} The update rules of the first player for (from top to bottom) GDA, LCGD, ConOpt, OGDA, and CGD, in a zero-sum game ($f = -g$).}
\end{figure}

\begin{enumerate}[wide, labelwidth=!, labelindent=0pt]
    \item \label{item:grad} The \emph{gradient term} $-\nabla_{x}f$, $\nabla_{y}f$ which corresponds to the most immediate way in which the players can improve their cost.
    \item \label{item:comp} The \emph{competitive term} $-D_{xy}f \nabla_yf$, $D_{yx}f \nabla_x f$ which can be interpreted either as anticipating the other player to use the naive (GDA) strategy, or as decreasing the other players influence (by decreasing their gradient).
    \item \label{item:consensus} The \emph{consensus term} $ \pm D_{xx}^2 \nabla_x f$, $\mp D_{yy}^2 \nabla_y f$ that determines whether the players prefer to decrease their gradient ($\pm = +$) or to increase it ($\pm = -$). The former corresponds the players seeking consensus, whereas the latter can be seen as the opposite of consensus. \\
    (It also corresponds to an approximate Newton's method. \footnote{Applying a damped and regularized Newton's method to the optimization problem of Player 1 would amount to choosing $x_{k+1} = x_{k} - \eta(\Id + \eta D_{xx}^2)^{-1} f \nabla_x f \approx x_{k} - \eta( \nabla_xf - \eta D_{xx}^{2}f \nabla_x f)$, for $\|\eta D_{xx}^2f\| \ll 1$.})
    \item \label{item:equilibrium} The \emph{equilibrium term} $(\Id + \eta^2 D_{xy}^2 D_{yx}^2 f)^{-1}$, $(\Id + \eta^2 D_{yx}^2 D_{xy}^2 f)^{-1}$, which arises from the players solving for the Nash equilibrium. 
    This term lets each player prefer strategies that are less vulnerable to the actions of the other player.
\end{enumerate}

Each of these is responsible for a different feature of the corresponding algorithm, which we can illustrate by applying the algorithms to three prototypical test cases considered in previous works.
\begin{itemize}[wide, labelwidth=!, labelindent=0pt]
    \item We first consider the bilinear problem $f(x,y) = \alpha xy$ (see Figure~\ref{fig:bilinear_strong}). 
        It is well known that GDA will fail on this problem, for any value of $\eta$. 
        For $\alpha = 1.0$, all the other methods converge exponentially towards the equilibrium, with ConOpt and SGA converging at a faster rate due to the stronger gradient correction ($\gamma > \eta$).
        If we choose $\alpha = 3.0$, OGDA, ConOpt, and SGA fail. 
        The former diverges, while the latter two begin to oscillate widely.
        If we choose $\alpha = 6.0$, all methods but CGD diverge.
    \item In order to explore the effect of the consensus Term~\ref{item:consensus}, we now consider the convex-concave problem $f(x,y) = \alpha(x^2 - y^2)$ (see Figure~\ref{fig:quad}). 
    For $\alpha = 1.0$, all algorithms converge at an exponential rate, with ConOpt converging the fastest, and OGDA the slowest.
    The consensus promoting term of ConOpt accelerates convergence, while the competition promoting term of OGDA slows down the convergence.
    As we increase $\alpha$ to $\alpha = 3.0$, the OGDA and ConOpt start failing (diverge), while the remaining algorithms still converge at an exponential rate. 
    Upon increasing $\alpha$ further to $\alpha = 6.0$, all algorithms diverge.
    \item We further investigate the effect of the consensus Term~\ref{item:consensus} by considering the concave-convex problem $f(x,y) = \alpha( -x^2 + y^2)$ (see Figure~\ref{fig:quad}).
    The critical point $(0,0)$ does not correspond to a Nash-equilibrium, since both players are playing their \emph{worst possible strategy}. 
    Thus it is highly undesirable for an algorithm to converge to this critical point.
    However for $\alpha = 1.0$, ConOpt does converge to $(0,0)$ which provides an example of the consensus regularization introducing spurious solutions.
    The other algorithms, instead, diverge away towards infinity, as would be expected.
    In particular, we see that SGA is correcting the problematic behavior of ConOpt, while maintaining its better convergence rate in the first example.
    As we increase $\alpha$ to $\alpha \in \{3.0,6.0\}$, the radius of attraction of $(0,0)$ under ConOpt decreases and thus ConOpt diverges from the starting point $(0.5,0.5)$, as well.
\end{itemize}
The first experiment shows that the inclusion of the competitive Term~\ref{item:comp} is enough to solve the cycling problem in the bilinear case. 
However, as discussed after Theorem~\ref{thm:convergence}, the convergence results of existing methods in the literature are not break down as the interactions between the players becomes too strong (for the given $\eta$).
The first experiment illustrates that this is not just a lack of theory, but corresponds to an actual failure mode of the existing algorithms.
The experimental results in Figure~\ref{fig:matrixGAN} further show that for input dimensions $m,n > 1$, the advantages of CGD can not be recovered by simply changing the stepsize $\eta$ used by the other methods.\\
While introducing the competitive term is enough to fix the cycling behaviour of GDA, OGDA and ConOpt (for small enough $\eta$) add the additional consensus term to the update rule, with opposite signs.\\
In the second experiment (where convergence is desired), OGDA converges in a smaller parameter range than GDA and SGA, while only diverging slightly faster in the third experiment (where divergence is desired). \\
ConOpt, on the other hand, converges faster than GDA in the second experiment, for $\alpha = 1.0$ however, it diverges faster for the remaining values of $\alpha$ and, what is more problematic, it converges to a spurious solution in the third experiment for $\alpha = 1.0$.\\
Based on these findings, the consensus term with either sign does not seem to systematically improve the performance of the algorithm, which is why we suggest to only use the competitive term (that is, use LOLA/LCGD, or CGD, or SGA).

\begin{figure}
    \centering
    \includegraphics[scale=0.21]{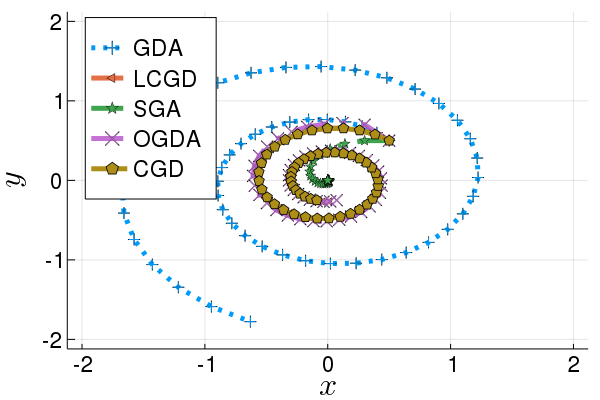}
    \includegraphics[scale=0.21]{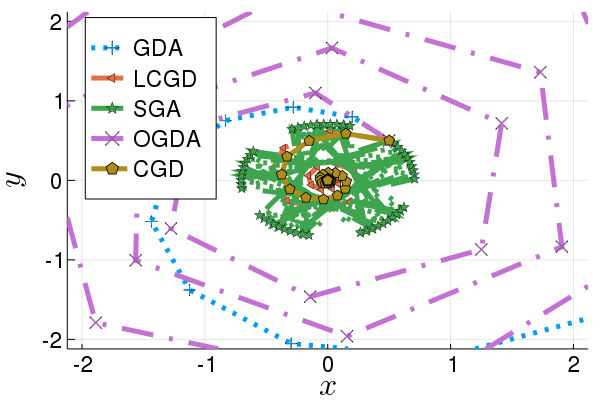}
    \includegraphics[scale=0.21]{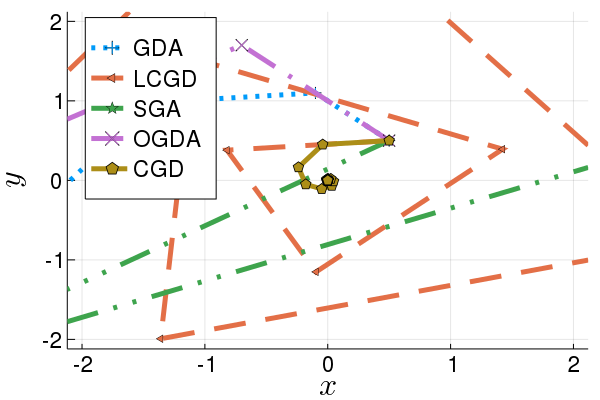}
    \caption{The first 50 iterations of GDA, LCGD, ConOpt, OGDA, and CGD with parameters $\eta = 0.2$ and $\gamma = 1.0$. 
    The objective function is $f(x,y) = \alpha x^{\top}y$ for, from left to right, $\alpha \in \{1.0, 3.0, 6.0\}$. (Note that ConOpt and SGA coincide on a bilinear problem)}
    \label{fig:bilinear_strong}
\end{figure}

\begin{figure}
    \centering
    \includegraphics[scale=0.105]{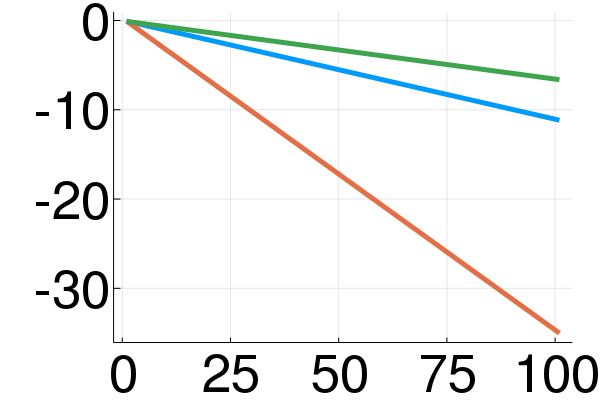}
    \includegraphics[scale=0.105]{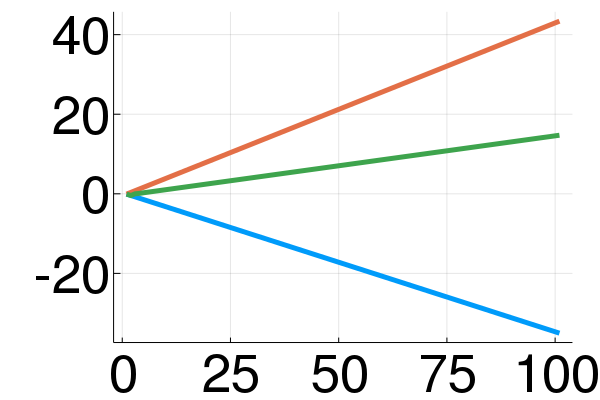}
    \includegraphics[scale=0.105]{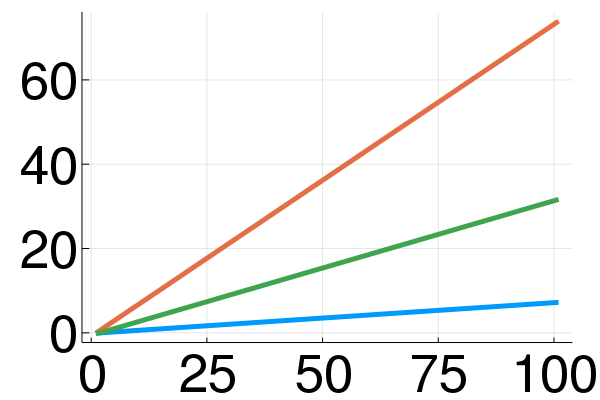}
    \includegraphics[scale=0.105]{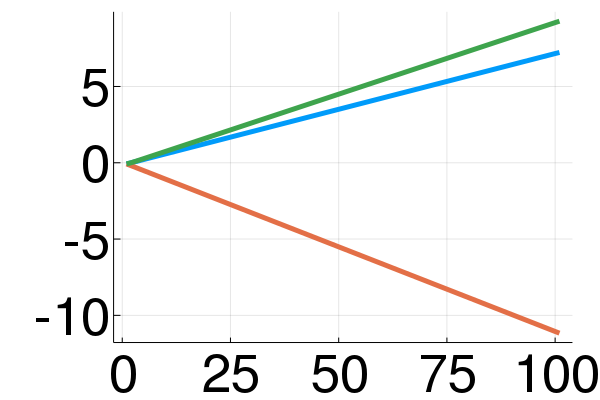}
    \includegraphics[scale=0.105]{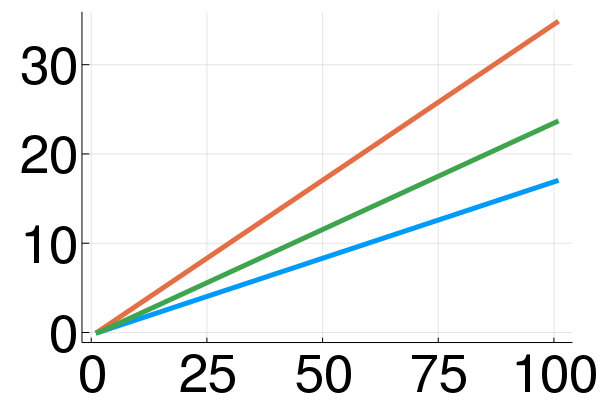}
    \includegraphics[scale=0.105]{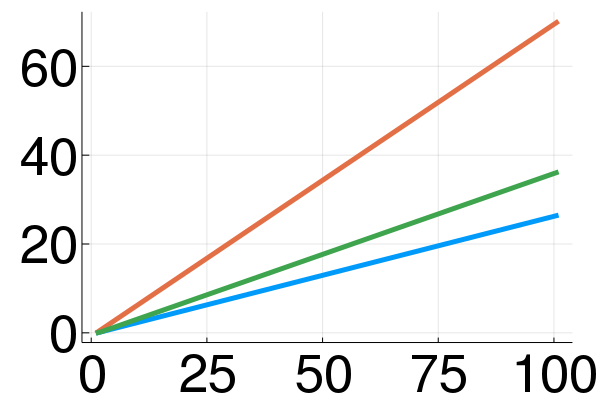}
    \caption{We measure the (non-)convergence to equilibrium in the separable convex-concave-- ($f(x,y) = \alpha( x^2 - y^2 )$, left three plots) and concave convex problem ($f(x,y) = \alpha( -x^2 + y^2 )$, right three plots), for $\alpha \in \{1.0,3.0,6.0\}$. 
    (Color coding given by \textcolor{Cyan}{ GDA, SGA, LCGD, CGD}, \textcolor{RedOrange}{ConOpt}, \textcolor{Green}{OGDA}, the y-axis measures  $\log_{10}(\|(x_{k},y_{k})\|)$ and the x-axis the number of iterations $k$.
    Note that convergence is desired for the first problem, while \emph{divergence} is desired for the second problem.}
    \label{fig:quad}
\end{figure}

\section{Implementation and numerical results}
We briefly discuss the implementation of CGD.\\
\textbf{Computing Hessian vector products:}
First, our algorithm requires products of the mixed Hessian $v \mapsto D_{xy}f v$, $v \mapsto D_{yx}g v$, which we want to compute using automatic differentiation.
As was already observed by \cite{pearlmutter1994fast}, Hessian vector products can be computed at minimal overhead over the cost of computing gradients, by combining forward-- and reverse mode automatic differentiation.
To this end, a function $x \mapsto \nabla_y f(x,y)$ is defined using reverse mode automatic differentiation.
The Hessian vector product can then be evaluated as $D_{xy}^2 f v = \left. \frac{\partial}{\partial{h}} \nabla_y f(x + h v, y) \right|_{h = 0}$, using forward mode automatic differentiation.
Many AD frameworks, like Autograd (\url{https://github.com/HIPS/autograd}) and ForwardDiff(\url{https://github.com/JuliaDiff/ForwardDiff.jl}, \citep{revels2016forward}) together with ReverseDiff(\url{https://github.com/JuliaDiff/ReverseDiff.jl}) support this procedure.
In settings where we are only given access to gradient evaluations but cannot use automatic differentiation to compute Hessian vector products, we can instead approximate them using finite differences.

\textbf{Matrix inversion for the equilibrium term}: 
Similar to a \emph{truncated Newton's method} \citep{nocedal2006numerical}, we propose to use iterative methods to approximate the inverse-matrix vector products arising in the equilibrium term~\ref{item:equilibrium}.
We will focus on zero-sum games, where the matrix is always symmetric positive definite, making the conjugate gradient (CG) algorithm the method of choice. 
For nonzero sum games we recommend using the GMRES or BCGSTAB (see for example \cite{saad2003iterative} for details).
We suggest terminating the iterative solver after a given relative decrease of the residual is achieved ($\| M x - y \| \leq \epsilon \|x\|$ for a small parameter $\epsilon$, when solving the system $Mx = y$).
In our experiments we choose $\epsilon = 10^{-6}$.
Given the strategy $\Delta x$ of one player, $\Delta y$ is the optimal counter strategy which can be found without solving another system of equations. 
Thus, we recommend in each update to only solve for the strategy of one of the two players using Equation~\eqref{eqn:nash}, and then use the optimal counter strategy for the other player.
The computational cost can be further improved by using the last round's optimal strategy as a a warm start of the inner CG solve.
An appealing feature of the above algorithm is that the number of iterations of CG adapts to the difficulty of solving the equilibrium term~\ref{item:equilibrium}.
If it is easy, we converge rapidly and CGD thus \emph{gracefully reduces to LCGD}, at only a small overhead.
If it is difficult, we might need many iterations, but correspondingly the problem would be very hard without the preconditioning provided by the equilibrium term.

\textbf{Experiment: Fitting a bimodal distribution:} We use a simple GAN to fit a Gaussian mixture model with two modes, in two dimensions (see supplement for details). 
We apply SGA, ConOpt ($\gamma = 1.0$), OGDA, and CGD for stepsize $\eta \in \{0.4, 0.1, 0.025, 0.005\}$ together with RMSProp ($\rho = 0.9)$.
In each case, CGD produces an reasonable approximation of the input distribution without any mode collapse.
In contrast, all other methods diverge after some initial cycling behaviour!
Reducing the steplength to $\eta = 0.001$, did not seem to help, either.
While we do not claim that the other methods can not be made work with proper hyperparameter tuning, this result substantiates our claim that CGD is significantly more robust than existing methods for competitive optimization.
For more details and visualizations of the whole trajectories, consult the supplementary material.

\begin{figure}
    \includegraphics[scale=0.215]{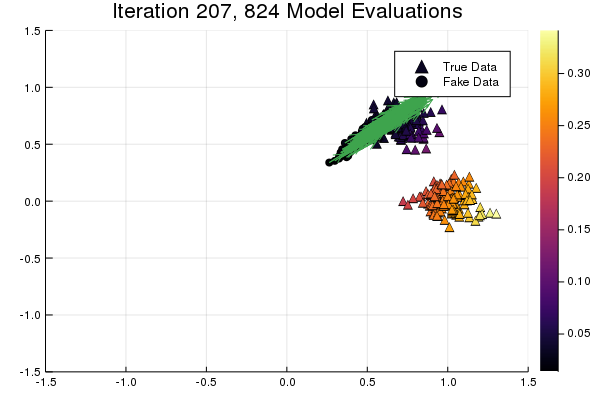}   
    \includegraphics[scale=0.215]{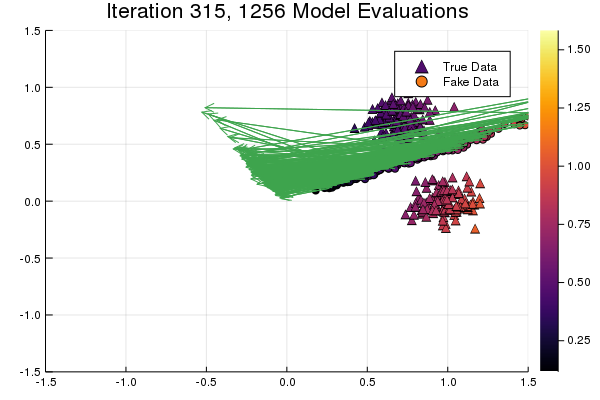}
    \includegraphics[scale=0.215]{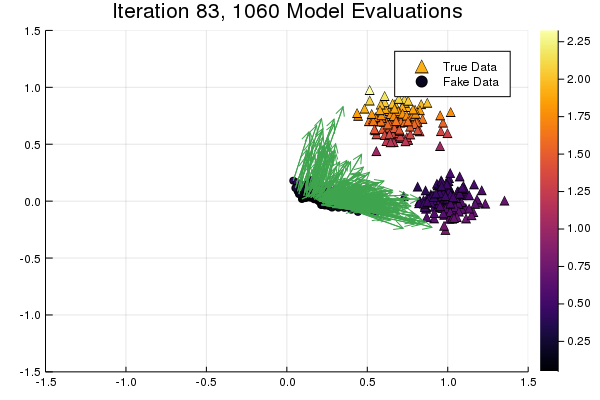}
    \includegraphics[scale=0.215]{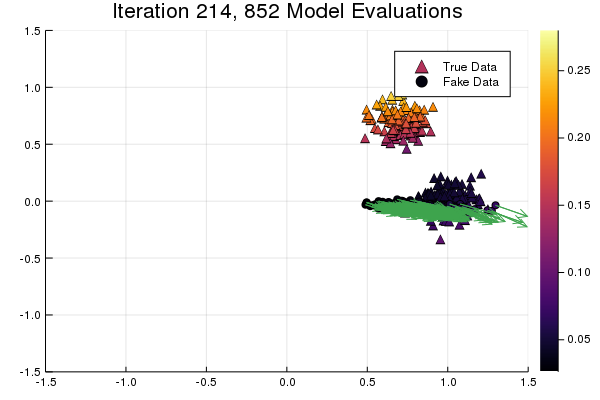}
    \includegraphics[scale=0.215]{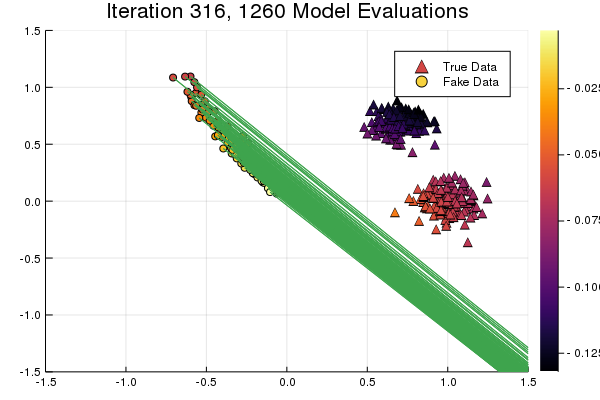}
    \includegraphics[scale=0.215]{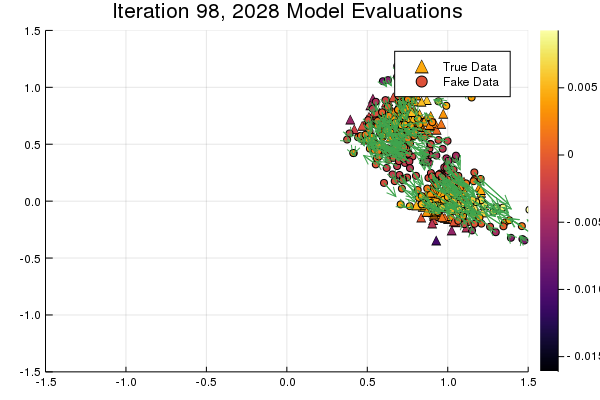}
    \caption{For all methods, initially the players cycle between the two modes (first column). For all methods but CGD, the dynamics eventually become unstable (middle column). Under CGD, the mass eventually distributes evenly among the two modes (right column). (The arrows show the update of the generator and the colormap encodes the logit output by the discriminator.)} 
\end{figure}

\textbf{Experiment: Estimating a covariance matrix:}
To show that CGD is also competitive in terms of computational complexity we consider the noiseless case of the covariance estimation example used by \cite{daskalakis2017training}[Appendix C], 
We study the tradeoff between the number of evaluations of the forward model (thus accounting for the inner loop of CGD) and the residual and observe that for comparable stepsize, the convergence rate of CGD is similar to the other methods.
However, due to CGD being convergent for larger stepsize it can beat the other methods by more than a factor two (see supplement for details).

\begin{figure}
    \centering
    \includegraphics[scale=0.215]{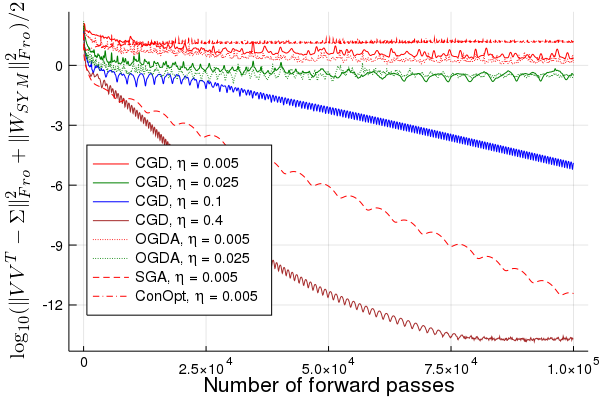}
    \includegraphics[scale=0.215]{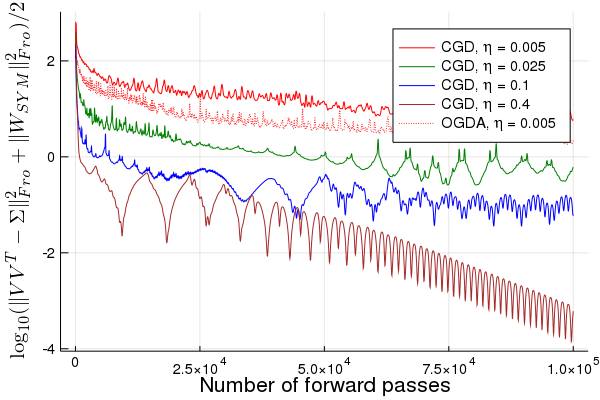}
    \includegraphics[scale=0.215]{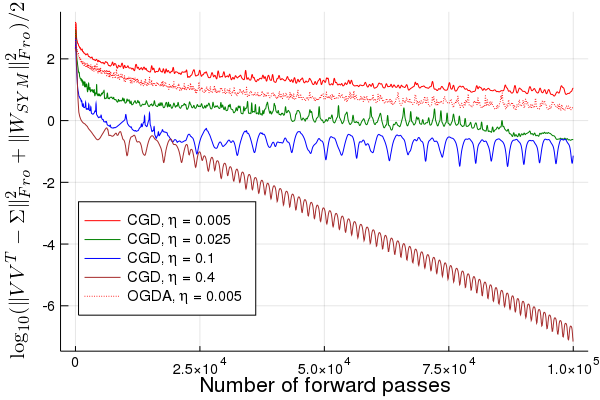}
    \caption{We plot the decay of the residual after a given number of model evaluations, for increasing problem sizes and $\eta \in \{0.005, 0.025, 0.1, 0.4\}$. Experiments that are not plotted diverged.}
    \label{fig:matrixGAN}
\end{figure}

\section{Conclusion and outlook}
We propose a novel and natural generalization of gradient descent to competitive optimization. 
Besides its attractive game-theoretic interpretation, the algorithm shows improved robustness properties compared to the existing methods, which we study using a combination of theoretical analysis and computational experiments.
We see four particularly interesting directions for future work.
First, we would like to further study the practical implementation and performance of CGD, developing it to become a useful tool for practitioners to solve competitive optimization problems.
Second, we would like to study extensions of CGD to the setting of more than two players.
As hinted in Section~\ref{sec:cgd}, a natural candidate would be to simply consider multilinear quadratically regularized local models, but the practical implementation and evaluation of this idea is still open.
Third, we believe that second order methods can be obtained from biquadratic approximations with cubic regularization, thus extending the cubically regularized Newton's method of \cite{nesterov2006cubic} to competitive optimization.
Fourth, a convergence proof in the nonconvex case analogue to \cite{lee2016gradient} is still out of reach in the competitive setting. 
A major obstacle to this end is the identification of a suitable measure of progress (which is given by the function value in the setting in the single agent setting), since norms of gradients can not be expected to decay monotonously for competitive dynamics in non-convex-concave games.

\subsubsection*{Acknowledgments}
A. Anandkumar is supported in part by Bren endowed chair, Darpa PAI, Raytheon, and Microsoft, Google and Adobe faculty fellowships.
F. Sch{\"a}fer gratefully acknowledges support by  the Air Force Office of Scientific Research under award number FA9550-18-1-0271 (Games for Computation and Learning) and by Amazon AWS under the Caltech Amazon Fellows program.
We thank the reviewers for their constructive feedback, which has helped us improve the paper.
We thank Alistair Letcher for pointing out an error in Theorem 2.2 of the previous version of this article.

\medskip

{\small \bibliography{refs} \par }

\newpage 

\appendix
\section{Proofs of convergence}
\begin{proof}[Proof of Theorem 2.3]
    To shorten the expressions below, we set $a \defeq \nabla_{x}f(x_{k})$, $b \defeq \nabla_{y} f(x_{k}, y_{k})$, $H_{xx} \defeq D_{xx}^2 f(x_{k}, y_{k})$, $H_{yy} \defeq D_{yy}^2 f(x_{k}, y_{k})$, $N \defeq D_{xy}^2 f(x_{k}, y_{k})$, $\tN \defeq \eta N$, $\tM \defeq \tN^{\top} \tN$, and $\bM \defeq \tN \tN^{\top}$.
    
    Letting $(x,y)$ be the update step of CGD and using the Taylor expansion, we obtain 
    
    \begin{align*}
        & \nabla_x f(x + x_k, y + y_k) 
        = a + H_{xx} x + N y 
        + \mathcal{R}_x(x,y)\\
        & \nabla_y f(x + x_k, y + y_k) = b + H_{yy} y + N^{\top} x + \mathcal{R}_y(x,y)
    \end{align*}
    where the remainder terms $\mathcal{R}_x$ and $\mathcal{R}_y$ are defined as
    \begin{align*}
    &\mathcal{R}_x(x,y) \defeq \int \limits_0^1 \left(D_{xx}^2f(t x + x_k, t y + y_k) - H_{xx}\right) x + \left(D_{xy}^2f(t x + x_k, t y + y_k) - N\right) y \ \mathrm{ d}t \\
    &\mathcal{R}_y(x,y) \defeq \int \limits_0^1 \left(D_{yy}^2f(t x + x_k, t y + y_k) - H_{yy}\right) y + \left(D_{yx}^2f(t x + x_k, t y + y_k) - N^{\top}\right) x \ \mathrm{ d}t 
    \end{align*}
    
    Using this formula, we obtain
    \begin{align*}
    &\left\| \nabla_x f\left(x + x_k, y + y_k\right) \right\|^2 + \left\| \nabla_y f\left(x + x_k, y + y_k\right) \right\|^2 
    -\|a\|^2 - \|b\|^2 \\
    &= 2 x^{\top} H_{xx} a +  2 a^{\top} N y + x^{\top} H_{xx} H_{xx} x + 2 x^{\top} H_{xx} N y  + y^{\top} N^{\top} N y \\
    &+ 2 y^{\top} H_{yy} b +  2 b^{\top} N^{\top} x + y^{\top} H_{yy} H_{yy} y + 2 y^{\top} H_{yy} N^{\top} x  + x^{\top} N N^{\top} x\\ 
    &+ 2a^{\top} \mathcal{R}_x(x,y)  + 2x^{\top} H_{xx} \mathcal{R}_x(x,y) + 2 y^{\top} N^{\top} \mathcal{R}_x(x,y) + \|\mathcal{R}_x(x,y)\|^2\\
    &+ 2b^{\top} \mathcal{R}_y(x,y)  + 2y^{\top} H_{yy} \mathcal{R}_y(x,y) + 2 x^{\top} N \mathcal{R}_y(x,y) + \|\mathcal{R}_y(x,y)\|^2
    \end{align*}
 
    We now observe that 
    \begin{align*}
    y^{\top} N^{\top} N y &= y^{\top} N^{\top} \left(-x/\eta - a\right)\\ 
    x^{\top} N N^{\top} x &= x^{\top} N \left(y/\eta - b \right)\
    \end{align*}
    Thus, by adding up the two terms we obtain
    \begin{equation*}
       x^{\top} N N^{\top} x + y^{\top} N^{\top} N y = - y^{\top} N^{\top} a - x^{\top} N b
    \end{equation*}
    plugging this into our computation yields
    
    \begin{align*}
    &\left\| \nabla_x f\left(x + x_k, y + y_k\right) \right\|^2 + \left\| \nabla_y f\left(x + x_k, y + y_k\right) \right\|^2 
    -\|a\|^2 - \|b\|^2 \\
    &= 2 x^{\top} H_{xx} a +  a^{\top} N y + x^{\top} H_{xx} H_{xx} x + 2 x^{\top} H_{xx} N y   \\
    &+ 2 y^{\top} H_{yy} b +  b^{\top} N^{\top} x + y^{\top} H_{yy} H_{yy} y + 2 y^{\top} H_{yy} N^{\top} x \\ 
    &+ 2a^{\top} \mathcal{R}_x(x,y)  + 2x^{\top} H_{xx} \mathcal{R}_x(x,y) + 2 y^{\top} N^{\top} \mathcal{R}_x(x,y) + \|\mathcal{R}_x(x,y)\|^2\\
    &+ 2b^{\top} \mathcal{R}_y(x,y)  + 2y^{\top} H_{yy} \mathcal{R}_y(x,y) + 2 x^{\top} N \mathcal{R}_y(x,y) + \|\mathcal{R}_y(x,y)\|^2
    \end{align*}
    
    We now plug the update rule of CGD into $x$ and $y$ and observe that $\tN^{\top}(\Id + \bM)^{-1} = (\Id + \tM)^{-1}\tN^{\top}$ to obtain
    \begin{equation*}
         x^{\top} N b + a^{\top} N y = -a^{\top} \left( \Id + \bM \right)^{-1} \bM a - b^{\top}\left(\Id + \tM\right)^{-1} \tM b,
    \end{equation*}
    
    yielding
    
    \begin{align*}
    &\left\| \nabla_x f\left(x + x_k, y + y_k\right) \right\|^2 + \left\| \nabla_y f\left(x + x_k, y + y_k\right) \right\|^2 
    -\|a\|^2 - \|b\|^2 \\
    &= 2 x^{\top} H_{xx} a -a^{\top} \left(\Id + \bM\right)^{-1} \bM a + x^{\top} H_{xx} H_{xx} x + 2 x^{\top} H_{xx} N y   \\
    &+ 2 y^{\top} H_{yy} b  -b^{\top} \left(\Id + \tM\right)^{-1} \tM b + y^{\top} H_{yy} H_{yy} y + 2 y^{\top} H_{yy} N^{\top} x \\ 
    &+ 2a^{\top} \mathcal{R}_x(x,y)  + 2x^{\top} H_{xx} \mathcal{R}_x(x,y) + 2 y^{\top} N^{\top} \mathcal{R}_x(x,y) + \|\mathcal{R}_x(x,y)\|^2\\
    &+ 2b^{\top} \mathcal{R}_y(x,y)  + 2y^{\top} H_{yy} \mathcal{R}_y(x,y) + 2 x^{\top} N \mathcal{R}_y(x,y) + \|\mathcal{R}_y(x,y)\|^2.
    \end{align*}
    
    In the above, we have used cancellation across the two players in order to turn the purely interactive terms into terms that induce convergence.
    We now observe that $\eta N y + \eta a = -x$ and $\eta N^{\top}x + \eta b = y$ by Equation~\eqref{eqn:whatIthink}, yielding
    
    \begin{align*}
    &\left\| \nabla_x f\left(x + x_k, y + y_k \right)\right\|^2 + \left\|\nabla_y f\left(x + x_k, y + y_k \right)\right\|^2 
    -\|a\|^2 - \|b\|^2 \\
    &= - 2 \eta a^{\top} H_{xx} a -a^{\top} \left(\Id + \bM\right)^{-1} \bM a + x^{\top} H_{xx} H_{xx} x + 2 \left(2x + \eta N y \right)^{\top}  H_{xx} N y   \\
    &+ 2 \eta b^{\top} H_{yy} b  -b^{\top} \left(\Id + \tM\right)^{-1} \tM b + y^{\top} H_{yy} H_{yy} y + 2 \left(2y - \eta N^{\top} x \right)^{\top} H_{yy} N^{\top} x \\ 
    &+ 2a^{\top} \mathcal{R}_x(x,y)  + 2x^{\top} H_{xx} \mathcal{R}_x(x,y) + 2 y^{\top} N^{\top} \mathcal{R}_x(x,y) + \|\mathcal{R}_x(x,y)\|^2\\
    &+ 2b^{\top} \mathcal{R}_y(x,y)  + 2y^{\top} H_{yy} \mathcal{R}_y(x,y) + 2 x^{\top} N \mathcal{R}_y(x,y) + \|\mathcal{R}_y(x,y)\|^2.
    \end{align*}
    The terms $- 2 \eta a^{\top} H_{xx} a -a^{\top} \left(\Id + \bM\right)^{-1} \bM a$ and $2 \eta b^{\top} H_{yy} b  -b^{\top} \left(\Id + \tM\right)^{-1} \tM b$ allow us to use curvature and interaction to show convergence. 
    We now want to estimate the remaining terms.
    
    To do so, we first use the Peter-Paul inequality to collect occurrences of $H_{xx}$ and $H_{yy}$.
    
    \begin{align*}
    &\left\| \nabla_x f\left(x + x_k, y + y_k \right)\right\|^2 + \left\|\nabla_y f\left(x + x_k, y + y_k \right)\right\|^2 
    -\|a\|^2 - \|b\|^2 \\
    &\leq - 2 \eta a^{\top} H_{xx} a -a^{\top} \left(\Id + \bM\right)^{-1} \bM a + 18 x^{\top} H_{xx} H_{xx} x + y^{\top} N^{\top} \left(\frac{1}{4}\Id + \eta H_{xx}\right) N y   \\
    &+ 2 \eta b^{\top} H_{yy} b  -b^{\top} \left(\Id + \tM\right)^{-1} \tM b + 18 y^{\top} H_{yy} H_{yy} y + x^{\top} N \left(\frac{1}{4} \Id + \eta H_{yy} \right) N^{\top} x \\ 
    &+ 2a^{\top} \mathcal{R}_x(x,y) + 2 y^{\top} N^{\top} \mathcal{R}_x(x,y) + 2\|\mathcal{R}_x(x,y)\|^2\\
    &+ 2b^{\top} \mathcal{R}_y(x,y) + 2 x^{\top} N \mathcal{R}_y(x,y) + 2\|\mathcal{R}_y(x,y)\|^2.
    \end{align*}
    
    Assuming now that $\|\eta H_{xx}\|, \|\eta H_{yy}\| \leq c$, we can estimate
     
    \begin{align*}
    &\left\| \nabla_x f\left(x + x_k, y + y_k \right)\right\|^2 + \left\|\nabla_y f\left(x + x_k, y + y_k \right)\right\|^2 
    -\|a\|^2 - \|b\|^2 \\
    &\leq - 2 \eta a^{\top} H_{xx} a -a^{\top} \left(\Id + \bM\right)^{-1} \bM a + 18 x^{\top} H_{xx} H_{xx} x + \left(\frac{1}{4} + c\right) \left\|N y\right\|^2   \\
    &+ 2 \eta b^{\top} H_{yy} b  -b^{\top} \left(\Id + \tM\right)^{-1} \tM b + 18 y^{\top} H_{yy} H_{yy} y + \left(\frac{1}{4} + c\right)  \left\|N^{\top} x\right\|^2 \\ 
    &+ 2a^{\top} \mathcal{R}_x(x,y) + 2 y^{\top} N^{\top} \mathcal{R}_x(x,y) + 2\|\mathcal{R}_x(x,y)\|^2\\
    &+ 2b^{\top} \mathcal{R}_y(x,y) + 2 x^{\top} N \mathcal{R}_y(x,y) + 2\|\mathcal{R}_y(x,y)\|^2.
    \end{align*}
    
    Using Peter-Paul again, we collect the $Ny, N^{\top}x$ terms
 
     \begin{align*}
    &\left\| \nabla_x f\left(x + x_k, y + y_k \right)\right\|^2 + \left\|\nabla_y f\left(x + x_k, y + y_k \right)\right\|^2 
    -\|a\|^2 - \|b\|^2 \\
    &\leq - 2 \eta a^{\top} H_{xx} a -a^{\top} \left(\Id + \bM\right)^{-1} \bM a + 18 x^{\top} H_{xx} H_{xx} x + \left(\frac{1}{2} + c\right) \left\|N y\right\|^2   \\
    &+ 2 \eta b^{\top} H_{yy} b  -b^{\top} \left(\Id + \tM\right)^{-1} \tM b + 18 y^{\top} H_{yy} H_{yy} y + \left(\frac{1}{2} + c\right)  \left\|N^{\top} x\right\|^2 \\ 
    &+ 2a^{\top} \mathcal{R}_x(x,y) + 6\|\mathcal{R}_x(x,y)\|^2\\
    &+ 2b^{\top} \mathcal{R}_y(x,y) + 6\|\mathcal{R}_y(x,y)\|^2.
    \end{align*}
    
    We now compute 
    \begin{align*}
        \left\|N^{\top} x \right\|^2 &= \left(a + \tN b\right)^{\top} \left( \Id + \bM \right)^{-2} \bM \left(a + \tN b\right)\\
        \left\|N y \right\|^2 &= \left(-b + \tN^{\top} a\right)^{\top} \left( \Id + \tM \right)^{-2} \tM \left(-b + \tN^{\top} a\right).
    \end{align*}
    by adding up the two, we obtain
    \begin{align*}
        \left\|N^{\top} x\right\|^2 + \left\|N y\right\|^2 &=
        a^{\top} \left( \Id + \bM \right)^{-2} \left(\bM +\bM^{2}\right) a + b^{\top} \left( \Id + \tM \right)^{-2} \left(\tM +\tM^{2}\right) b\\
        &= a^{\top} \left(\Id + \bM\right)^{-1} \bM a + b^{\top} \left(\Id + \tM \right)^{-1} \tM b.
    \end{align*}
    
    Plugging this into our main computation, we obtain 
    
    \begin{align*}
    &\left\| \nabla_x f\left(x + x_k, y + y_k \right)\right\|^2 + \left\|\nabla_y f\left(x + x_k, y + y_k \right)\right\|^2 
    -\|a\|^2 - \|b\|^2 \\
    &\leq - 2 \eta a^{\top} H_{xx} a -\left(\frac{1}{2} + c\right)a^{\top} \left(\Id + \bM\right)^{-1} \bM a + 18 x^{\top} H_{xx} H_{xx} x \\
    &+ 2 \eta b^{\top} H_{yy} b  - \left(\frac{1}{2} - c\right)b^{\top} \left(\Id + \tM\right)^{-1} \tM b + 18 y^{\top} H_{yy} H_{yy} y  \\ 
    &+ 2a^{\top} \mathcal{R}_x(x,y) + 6\|\mathcal{R}_x(x,y)\|^2\\
    &+ 2b^{\top} \mathcal{R}_y(x,y) + 6\|\mathcal{R}_y(x,y)\|^2.
    \end{align*}
    
    We now set $c = 1/18$ and define the spectral function $\phi(\lambda) \defeq 2 \lambda - |\lambda|$ to obtain
    
    \begin{align*}
    &\left\| \nabla_x f\left(x + x_k, y + y_k \right)\right\|^2 + \left\|\nabla_y f\left(x + x_k, y + y_k \right)\right\|^2 
    -\|a\|^2 - \|b\|^2 \\
    &\leq - 2 \eta a^{\top} h_{\pm}\left(H_{xx}\right) a -\frac{1}{3} a^{\top} \left(\Id + \bM\right)^{-1} \bM a \\
    &- 2 \eta b^{\top} h_{\pm}\left(-H_{yy}\right) b  - \frac{1}{3} b^{\top} \left(\Id + \tM\right)^{-1} \tM b \\ 
    &+ 2a^{\top} \mathcal{R}_x(x,y) + 6\|\mathcal{R}_x(x,y)\|^2\\
    &+ 2b^{\top} \mathcal{R}_y(x,y) + 6\|\mathcal{R}_y(x,y)\|^2.
    \end{align*}
    
    To conclude, we need to estimate the $\mathcal{R}$-terms.
    Using the Lipschitz-continuity of the Hessian, we can estimate
    \begin{equation}
        \|\mathcal{R}_x(x,y)\|, \|\mathcal{R}_x(x,y)\| \leq (\|x\| +\|y\|)^2  \leq 4 \eta^2 L (\|a\| + \|b\|)^2 \leq 8 \eta^2 L (\|a\|^2 + \|b\|^2).
    \end{equation}
    Using this estimate, we obtain 
    
    \begin{align*}
    &\left\| \nabla_x f\left(x + x_k, y + y_k \right)\right\|^2 + \left\|\nabla_y f\left(x + x_k, y + y_k \right)\right\|^2 
    -\|a\|^2 - \|b\|^2 \\
    &\leq - 2 \eta a^{\top} h_{\pm}\left(H_{xx}\right) a -\frac{1}{3} a^{\top} \left(\Id + \bM\right)^{-1} \bM a \\
    &- 2 \eta b^{\top} h_{\pm}\left(-H_{yy}\right) b  - \frac{1}{3} b^{\top} \left(\Id + \tM\right)^{-1} \tM b \\ 
    &+ 32\eta^2 L\left(\|a\| + \|b\|\right) \left(\|a\|^2 + \|b\|^2\right)  + 768 \eta^4 L^2 (\|a\|^2 + \|b\|^2)\\
    \end{align*}
    
    Rearranging terms, we finally obtain 

    \begin{align*}
    &\left\| \nabla_x f\left(x + x_k, y + y_k \right)\right\|^2 + \left\|\nabla_y f\left(x + x_k, y + y_k \right)\right\|^2 
    -\|a\|^2 - \|b\|^2 \\
    \leq& - a^{\top} \left(2 \eta h_{\pm}\left(H_{xx}\right) + \frac{1}{3} \left(\Id + \bM\right)^{-1} \bM - 32\eta^2 L\left(\|a\| + \|b\|\right) - 768 \eta^4 L^2 \right)a    \\
    &  - b^{\top} \left(2 \eta h_{\pm}\left(- H_{yy}\right) + \frac{1}{3} \left(\Id + \tM\right)^{-1} \tM - 32\eta^2 L \left(\|a\| + \|b\|\right) - 768 \eta^4 L^2 \right)b    
    \end{align*}
\end{proof}
Theorem 2.4 follows from Theorem 2.3 by relatively standard arguments:
\begin{proof}[Proof of Theorem 2.4]
    Since $\nabla_{x}f(x^{*},y^{*}), \nabla_{x}f(x^{*},y^{*}) = 0 $ and the gradient and Hessian of $f$ are continuous, there exists a neighbourhood $\mathcal{V}$ of $(x^{*},y^{*})$ such that for all possible starting points $(x_1,y_1) \in \mathcal{V}$, we have $\|(\nabla_{x} f(x_2,y_2), \nabla_{y} f(x_2, y_2)\| \leq (1-\lambda_{\min}/4) \|(\nabla_{x} f(x_1,y_1), \nabla_{y} f(x_1, y_1)\|$. 
    Then, by convergence of the geometric series there exists a closed neighbourhood $\mathcal{U} \subset \mathcal{V}$ of $(x^{*},y^{*})$, such that for $(x_0,y_0) \in \mathcal{U}$ we have $(x_k,y_k) \in \mathcal{V}, \forall k \in \mathbb{N}$ and thus $(x_k,y_k)$ converges at an exponential rate to a point in $\mathcal{U}$.
\end{proof}

\section{Details regarding the experiments}
\subsection{Experiment: Estimating a covariance matrix}
We consider the problem $-g(V,W) = f(W,V) = \sum_{ijk} W_{ij}\left(\hat{\Sigma}_{ij} - (V\hat{\Sigma} V^{\top})_{i,j}\right)$, where the $\hat{\Sigma}$ are empirical covariance matrices obtained from samples distributed according to $\mathcal{N}(0,\Sigma)$.
For our experiments, the matrix $\Sigma$ is created as $\Sigma = U U^T$, where the entries of $U \in \mathbb{R}^{d \times d}$ are distributed i.i.d. standard Gaussian.
We consider the algorithms OGDA, SGA, ConOpt, and CGD, with $\gamma = 1.0$, $\epsilon = 10^{-6}$ and let the stepsizes range over $\eta \in \{0.005, 0.025, 0.1, 0.4\}$.
We begin with the deterministic case $\hat{\Sigma} = \Sigma$, corresponding to the limit of large sample size. 
We let $d \in \{20, 40, 60\}$ and evaluate the algorithms according to the trade-off between the number of forward evaluations and the corresponding reduction of the residual $\|W+W^{\top}\|_{\operatorname{FRO}}/2+ \|UU^{\top} - V V^{\top}\|_{\operatorname{FRO}}$, starting with a random initial guess (the same for all algorithms) obtained as $W_{1} = \delta W$, $V_{1} = U + \delta V$, where the entries of $\delta W, \delta V$ are i.i.d uniformly distributed in $[-0.5,0.5]$.
We count the number of "forward passes" per outer iteration as follows.
\begin{itemize}
    \item OGDA: 2
    \item SGA: 4
    \item ConOpt: $6$
    \item CGD: 4 + 2 $*$ number of CG iterations 
\end{itemize}
The results are summarized in Figure~\ref{fig:matrixDet}. 
We see consistently that for the same stepsize, CGD has convergence rate comparable to that of OGDA.
However, as we increase the stepsize the other methods start diverging, thus allowing CGD to achieve significantly better convergence rates by using larger stepsizes. 
For larger dimensions ($d\in \{40, 60\}$) OGDA, SGA, and ConOpt become even more unstable such that OGDA with the smallest stepsize is the only other method that still converges, although at a much slower rate than CGD with larger stepsizes.
\begin{figure}
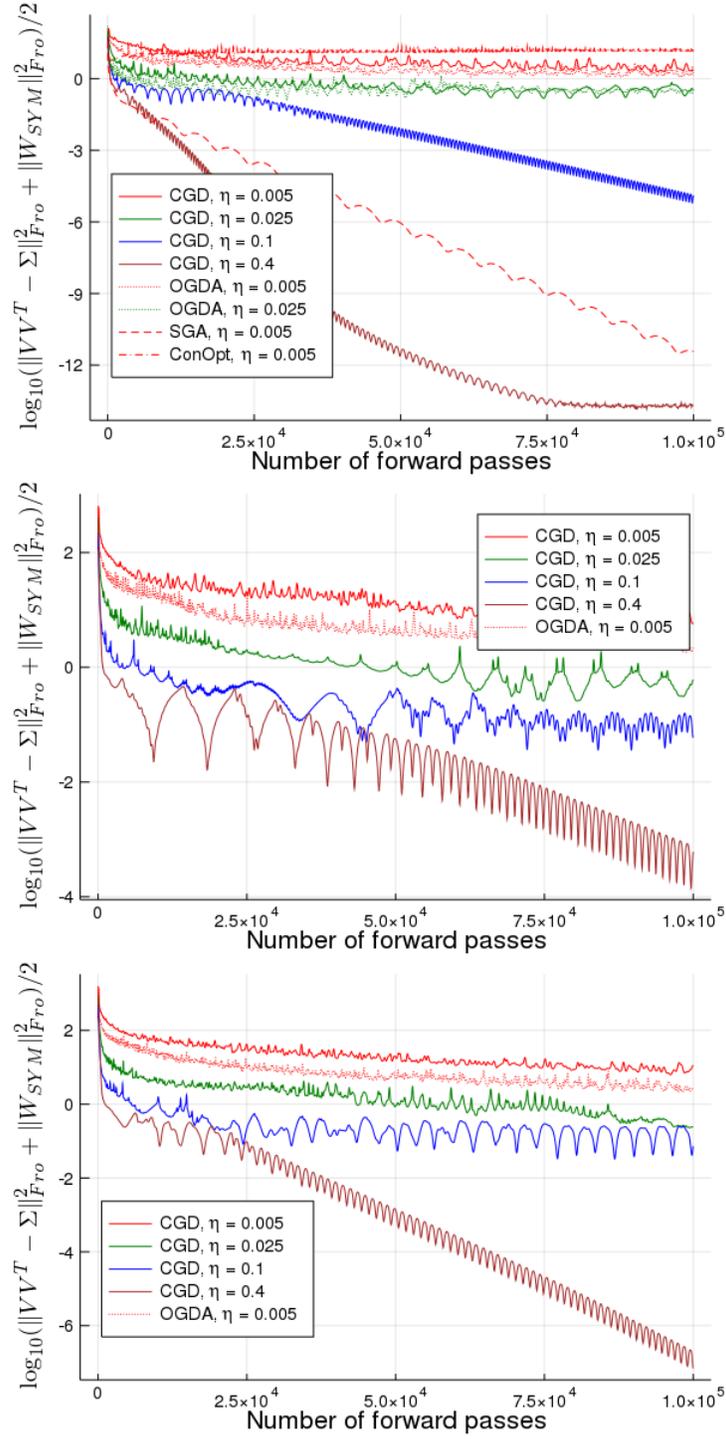

    \centering
    \includegraphics[scale=0.45]{apfigures/cvest_d_20.png}
    \includegraphics[scale=0.45]{apfigures/cvest_d_40.png}
    \includegraphics[scale=0.45]{apfigures/cvest_d_60.png}
    \caption{The decay of the residual as a function of the number of forward iterations ($d = 20, 40, 60$, from top to bottom). 
             \textbf{Note that missing combinations of algorithms and stepsizes correspond to divergent experiments}.
             While the exact behavior of the different methods is subject to some stochasticity, results as above were typical during our experiments.}
    \label{fig:matrixDet}
\end{figure}
We now consider the stochastic setting, where at each iteration a new $\hat{\Sigma}$ is obtained as the empirical covariance matrix of $N$ samples of $\mathcal{N}(0,\Sigma)$, for $N \in \{100, 1000, 10000\}$.
\begin{figure}
    \centering
    \includegraphics[scale=0.45]{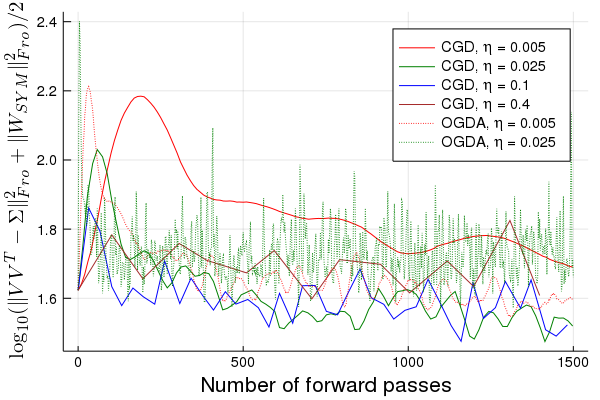}
    \includegraphics[scale=0.45]{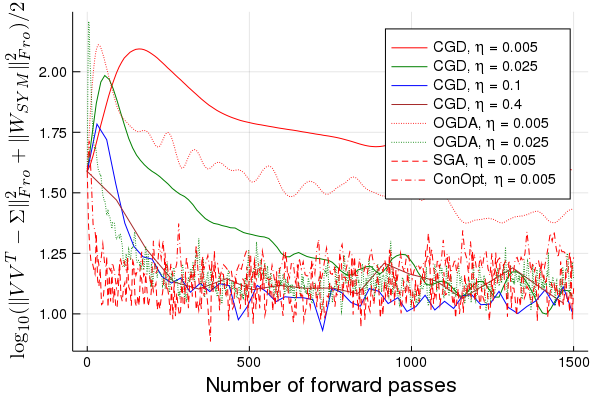}
    \includegraphics[scale=0.45]{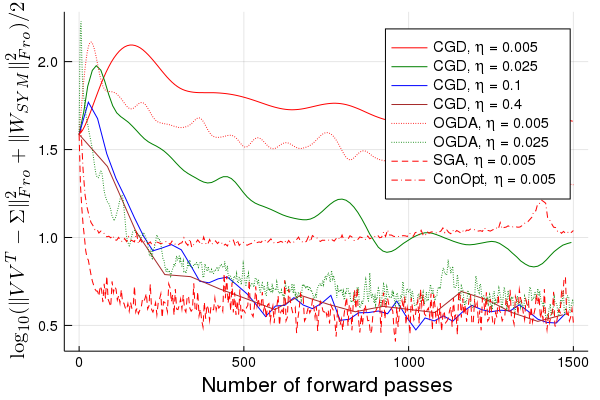}
    \caption{The decay of the residual as a function of the number of forward iterations in the stochastic case with $d = 20$ and  batch sizes of $100, 1000, 10000$, from top to bottom).}
    \label{fig:matrixStoch}
\end{figure}
In this setting, the stochastic noise very quickly dominates the error, preventing CGD from achieving significantly better approximations than the other algorithms, while other algorihtms decrease the error more rapidly, initially.
It might be possible to improve the performance of our algorithm by lowering the accuracy of the inner linea system solve, following the intuition that in a noisy environment, a very accurate solve is not worth the cost.
However, even without tweaking $\epsilon$ it is noticable than the trajectories of CGD are less noisy than those of the other algorithms, and it is furthermore the only algorithm that does not diverge for any of the stepsizes.
It is interesting to note that the trajectories of CGD are consistently more regular than those of the other algorithms, for comparable stepsizes.

\subsection{Experiment: Fitting a bimodal distribution} We use a GAN to fit a Gaussian mixture of two Gaussian random variables with means $\mu_{1} = (0,1)^{\top}$ and $\mu_{2} = (2^{-1/2}, 2^{-1/2})^{\top}$, and standard deviation $\sigma = 0.1$
Generator and discriminator are given by dense neural nets with four hidden layers of $128$ units each that are initialized as orthonormal matrices, and ReLU as nonlinearities after each hidden layer. 
The generator uses 512-variate standard Gaussian noise as input, and both networks use a linear projection as their final layer. 
At each step, the discriminator is shown 256 real, and 256 fake examples.
We interpret the output of the discriminator as a logit and use sigmoidal crossentropy as a loss function.
We tried stepsizes $\eta \in \{0.4, 0.1, 0.025, 0.005\}$ together with RMSProp ($\rho = 0.9)$ and applied SGA, ConOpt ($\gamma = 1.0$), OGDA, and CGD. 
Note that the RMSProp version of CGD with diagonal scaling given by the matrices $S_x$, $S_y$ is obtained by replacing the quadratic penalties $x^{\top}x/(2 \eta)$ and $y^{\top}y/(2 \eta)$ in the local game by $x^{\top} S_x^{-1}x/(2 \eta)$ and $y^{\top} S_x^{-1}y/(2 \eta)$, and carrying out the remaining derivation as before. 
This also allows to apply other adaptive methods like ADAM.
On all methods, the generator and discriminator are initially chasing each other across the strategy space, producing the typical cycling pattern. 
When using SGA, ConOpt, or OGDA, however, eventually the algorithm diverges with the generator either mapping all the mass far away from the mode, or collapsing the generating map to become zero. 
Therefore, we also tried decreasing the stepsize to $0.001$, which however did not prevent the divergence.
For CGD, after some initial cycles the generator starts splitting the mass and distributes is roughly evenly among the two modes.
During our experiments, this configuration appeared to be robust.

\end{document}